\documentclass[12pt]{amsart}\usepackage{enumerate}
\usepackage{amsmath, amssymb, amsthm, amsfonts}
\usepackage{tikz}
\usepackage{geometry}
\geometry{portrait, tmargin=2.2cm, bmargin=2.2cm, lmargin=2.7cm, rmargin=2.7cm}
\usepackage{bbm} %for the 1 symbol of indicator functions
\usepackage{url}
\usepackage{cmap} %supports pdf copy-paste

\newtheorem{introthm}{Theorem}
\newtheorem{introcor}[introthm]{Corollary}
\newtheorem{theorem}{Theorem}[section]

\newtheorem{lemma}[theorem]{Lemma}

\newtheorem*{claim*}{Claim}
\newtheorem{assumption}[theorem]{Assumption}

\theoremstyle{definition}
\newtheorem{remark}[theorem]{Remark}
\newtheorem*{remark*}{Remark}

\newtheorem{definition}[theorem]{Definition}

\def\E{\mathbb{E}}
\def\P{\mathbb{P}}

\def\IR{\mathbb{R}}
\def\IZ{\mathbb{Z}}
\def\IW{\mathcal{W}}
\def\IA{\mathcal{A}}
\def\IM{\mathcal{M}}
\def\PS{\mathcal{P}}  %power set

\def\eps{\varepsilon}
\def\ga{\gamma}
\def\Ga{\Gamma}

\def\sm{\setminus}

\DeclareMathOperator{\Aut}{Aut}
\DeclareMathOperator{\Stab}{Stab}
\DeclareMathOperator{\var}{var}
\DeclareMathOperator{\cov}{cov}

\DeclareMathOperator{\supp}{supp}

\newcommand{\defeq}{\mathrel{\vcenter{\baselineskip0.5ex \lineskiplimit0pt
                     \hbox{\scriptsize.}\hbox{\scriptsize.}}}%
                     =}

\def\ind{\mathbbm{1}} %the 1 symbol of indicator functions

\begin{document}

\title{Entropy and expansion}

\author[Cs\'{o}ka]{Endre Cs\'{o}ka}
\address{MTA Alfr\'ed R\'enyi Institute of Mathematics, Budapest, Hungary} 
%H-1053 Budapest, Re\'altanoda utca 13-15}
\email{csokaendre@gmail.com}

\author[Harangi]{Viktor Harangi}
\address{MTA Alfr\'ed R\'enyi Institute of Mathematics, Budapest, Hungary} 
%H-1053 Budapest, Re\'altanoda utca 13-15}
\email{harangi@renyi.hu}

\author[Vir\'{a}g]{B\'{a}lint Vir\'{a}g}
\address{Department of Mathematics, University of Toronto}
\email{balint@math.toronto.edu}

\keywords{Entropy inequality, expansion, Cheeger constant, graph isoperimetry, 
factor-of-IID, local algorithm, independent set}

\subjclass[2010]{94A17, 60K35, 37A50, 05E18, 05C69}

%\date{}

%
\begin{abstract}
Shearer's inequality bounds the sum of joint entropies of random
variables in terms of the total joint entropy.  We give another lower
bound for the same sum in terms of the individual entropies when the
variables are functions of independent random seeds. The inequality
involves a constant characterizing the expansion properties of the
system.

Our results generalize to entropy inequalities used in recent work 
in invariant settings, including the edge-vertex inequality for
factor-of-IID processes, Bowen's entropy inequalities, and Bollob\'as's
entropy bounds in random regular graphs.

The proof method yields inequalities for other measures of randomness,
including covariance.

As an application, we give upper bounds for independent sets 
in both finite and infinite graphs.
\end{abstract}

\maketitle

\section{Introduction}

In recent years entropy inequalities for so-called \emph{local algorithms} 
and \emph{factor-of-IID processes} have been used to great effect in proving 
new results about random regular graphs and regular graphs of large girth. 
The method of entropy bounds actually goes back to a 1981 result of Bollob\'as \cite{bollobas} 
where he gave an upper bound on the independence ratio of random regular graphs. 
The counting argument behind that result essentially proves 
an \emph{edge-vertex entropy inequality} in a special setting. 
Interestingly, this entropy inequality also comes up 
in the seminal work of L.~Bowen on the $f$-invariant \cite{f-invariant,bowen}. 

All previous proofs for the edge-vertex entropy inequality heavily relied on the fact that 
the underlying graph does not contain (short) cycles. 
One would think that the acyclic property of the graph is not crucial. 
Indeed, in this paper we present an approach that directly relates the existence 
of such entropy inequalities to expansion properties of the graph 
allowing us to vastly generalize these inequalities. 

\subsection{The setup} 

Throughout the paper $X_v$, $v \in V$, will denote a finite or infinite system of random variables obtained as follows. 
Suppose that we have a collection of independent random variables $Z_1, Z_2, \ldots$ that we will refer to as \emph{random seeds}. 
Then for each $v$, $X_v$ is obtained as a measurable function of some specified subcollection of these random seeds. 
The main result is a general entropy inequality involving the Shannon entropies of our random variables $X_v$ 
and the joint entropies of $X_v$ for given subsets of $V$. 
We will see that the best coefficient for which our inequality holds 
can be expressed as a certain hyperedge expansion of the system. 

One may think of $X_v$, $v \in V$, as the output of a randomized (local) algorithm on a (large) network, 
where $V$ denotes the set of nodes. The algorithm is distributed among the nodes 
with each node having access only to a certain subcollection 
of the random seeds used by the algorithm, 
usually dictated by the restriction that each node 
can communicate only with close-by nodes. 
The entropy inequalities presented in this paper 
provide constraints for what can be achieved 
by such distributed algorithms. 

As we will see, the finite version nicely complements \emph{Shearer's inequality}, 
while the infinite version generalizes the afore-mentioned edge-vertex entropy inequality. 

\subsection{The finite version} 

For the moment let $X_v$, $v \in V$, denote any random variables indexed by elements of a finite set $V$ 
such that each $X_v$ takes values from a finite set. 
Suppose that we have a hypergraph $G$ over the vertex set $V$. 
This simply means that we have an edge set $E$ containing subsets $e$ of $V$, 
that is, each hyperedge $e \in E$ is a subset $e \subseteq V$. 
For the sake of simplicity the reader may think of $G$ 
as an ordinary graph where each edge $e$ is a pair of vertices. 
One can consider the joint entropy $H(X_e)$ of 
the random variables $X_v$, $v \in e$. 
Given a hypergraph over $V$ with edge set $E$, 
\emph{Shearer's inequality} \cite{shearer} says that  
\begin{equation} \label{eq:shearer}
\sum_{e \in E} H( X_e ) \geq k H(X_V) ,
\end{equation}
where $k$ is such that each $v \in V$ is contained by at least $k$ hyperedges 
(i.e.~$k$ is the minimum degree of the hypergraph) 
and $H(X_V)$ denotes the joint entropy of all $X_v$, $v \in V$. 
Shearer's inequality has many elegant applications in various areas, 
most notably in set systems and graph homomorphisms. 

We would like to complement \eqref{eq:shearer} by proving 
another lower bound for the same sum of joint entropies 
but this time in terms of the individual entropies $H(X_v)$. 
If no additional assumption is made, 
then only trivial bounds can be obtained 
(i.e.~bounds that follow from the inequalities $H(X_e) \geq H(x_v)$, $v \in e$). 

To get a non-trivial inequality, we will work with random variables $X_v$ 
that are measurable functions of independent random seeds with the sole restriction 
that each $X_v$ ``can see'' only some specified subcollection of the seeds. 
We will prove that 
$$ \sum_{e \in E} H( X_e ) \geq \beta \sum_{v \in V} H( X_v ) ,$$ 
where $\beta$ is related to the expansion properties of the hypergraph 
with respect to sets describing which $X_v$ can see a given seed; 
see Theorem \ref{thm:hypergraph_version}. 
To be more specific, but without having to go into too many details, 
we state our inequality here for the case when $G$ is a regular graph. 
\begin{introthm}[Regular graph version] \label{thm:regular} 
Suppose that $G$ is a $d$-regular graph with vertex set $V$ and edge set $E$. 
Suppose that $Z_1, \ldots, Z_m$ are independent random variables 
and a subset $W_i \subseteq V$ is given for each $i=1,\ldots, m$. 
Informally, the random value of $Z_i$ is passed on to the vertices in $W_i$, 
and for each $v \in V$ we have a random variable $X_v$ 
that is an arbitrary measurable function of the values passed on to $v$. 
More precisely, for each vertex $v \in V$ let $X_v$ be a measurable function of 
those $Z_i$ for which $v \in W_i$. 
Then 
\begin{equation*}
\sum_{\{u,v\} \in E} H(X_u, X_v) \geq \beta \sum_{v \in V} H(X_v) 
\mbox{, where } \beta = \frac{1}{2} \left( d + 
\min_{1 \leq i \leq m; W \subseteq W_i} \frac{| \partial W |}{|W|} \right) .
\end{equation*}
Here $\partial W$ denotes the \emph{edge boundary} of $W$, that is, 
the set of edges with one endpoint in $W$ and one endpoint outside $W$. 
The above minimum of $|\partial W| / |W|$ is 
the \emph{edge Cheeger constant} (or \emph{isoperimetry number}) 
of $G$ with respesct to the set system $W_1, \ldots, W_m$. 
\end{introthm}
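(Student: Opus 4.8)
The plan is to reduce the statement to a more flexible hypergraph inequality (presumably Theorem~\ref{thm:hypergraph_version}) by encoding the seed-visibility structure as a set system, but since that theorem's statement is not yet available in this excerpt, I will instead describe a self-contained argument and indicate where the expansion constant enters. The starting point is the classical sub-additivity/chain-rule machinery for Shannon entropy. For a $d$-regular graph, writing $H(X_u,X_v) = H(X_u) + H(X_v) - I(X_u;X_v)$, the left side equals $d\sum_{v} H(X_v) - \sum_{\{u,v\}\in E} I(X_u;X_v)$, so the claim is equivalent to
\begin{equation*}
\sum_{\{u,v\}\in E} I(X_u;X_v) \;\leq\; \left(\frac{d}{2} - \frac{1}{2}\,h\right)\sum_{v\in V} H(X_v),
\end{equation*}
where $h$ is the edge Cheeger constant of $G$ with respect to $W_1,\dots,W_m$. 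So the whole content is an upper bound on the total pairwise mutual information in terms of the individual entropies, with the savings over the trivial bound $\frac{d}{2}\sum_v H(X_v)$ governed by expansion.

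The key idea I would pursue is to process the seeds one at a time and track how much each seed can contribute to the mutual-information sum. Order the seeds $Z_1,\dots,Z_m$ and reveal them successively; let $\mathcal{F}_i$ be the $\sigma$-algebra generated by $Z_1,\dots,Z_i$. The individual entropy $H(X_v)$ telescopes as $\sum_i \big(H(X_v\mid \mathcal{F}_{i-1}) - H(X_v\mid \mathcal{F}_i)\big)$, and only the seeds with $v\in W_i$ contribute to the $v$-th term. The main step is to show that for a \emph{single} seed $Z_i$, the increment it induces in $\sum_{\{u,v\}\in E} I(X_u;X_v)$ is at most $\big(\tfrac{d}{2}-\tfrac{1}{2}\,\tfrac{|\partial W_i|}{|W_i|}\big)$ times the increment it induces in $\sum_{v} H(X_v)$ — or more precisely, uniformly over all measurable subsets $W\subseteq W_i$ one controls the ``conditional'' version of this inequality. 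The point of passing to all $W\subseteq W_i$ is that after conditioning on $Z_1,\dots,Z_{i-1}$, the ``still-uninformed'' vertices among $W_i$ form some sub-structure, and the worst case over all such sub-structures is exactly the Cheeger infimum appearing in $\beta$. One then sums the per-seed inequalities over $i$ and uses the telescoping identity to conclude.

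For the per-seed step itself, I expect to use the following type of estimate: if a single random variable $Z$ feeds a collection of variables $(X_v)_{v\in W}$ (and the other $X_v$ are independent of $Z$ given the past), then the joint entropy contributed to an edge $\{u,v\}$ is bounded by what the endpoints individually gain, with a genuine \emph{boundary} loss on edges $\{u,v\}$ with exactly one endpoint in $W$: such an edge sees only one of the two newly-informed variables, so it contributes to $\sum_v H(X_v)$ but not (fully) to $\sum I(X_u;X_v)$. Counting: $W$ spans at most $\tfrac{d|W|-|\partial W|}{2}$ internal edges and $|\partial W|$ boundary edges, while contributing to $|W|$ vertex terms; optimizing the ratio $\frac{\#\text{internal edges} + \frac12\#\text{boundary edges(?)}}{|W|}$ yields the coefficient $\tfrac12\big(d - |\partial W|/|W|\big)$. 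The main obstacle, and the place where care is needed, is making the per-seed conditional inequality rigorous: after conditioning on earlier seeds the variables $X_v$ are no longer simple functions of $Z_i$ alone (they also depend on later seeds), so one must either further condition on $Z_{i+1},\dots,Z_m$ and argue monotonicity, or phrase the per-seed bound directly in terms of conditional mutual informations and invoke the chain rule carefully. I expect this bookkeeping — rather than any single clever inequality — to be the technical heart of the proof.
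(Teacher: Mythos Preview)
Your outline has the right skeleton---reveal the seeds one by one and telescope---which is exactly the paper's approach. But two concrete pieces are missing, and your assessment of where the difficulty lies is off.

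First, the per-seed step you call ``the main obstacle'' is in fact the easy part. Define $h(i,U) = H(X_U \mid Z_1,\dots,Z_{i-1}) - H(X_U \mid Z_1,\dots,Z_i)$. The only fact needed is that $h(i,U') \le h(i,U)$ whenever $U' \subseteq U$; in particular $h(i,\{u,v\}) \ge \max\bigl(h(i,u),h(i,v)\bigr)$. This is a one-line chain-rule computation: write $H(X_1,X_2\mid Y) = H(X_1\mid Y) + H(X_2\mid X_1,Y)$, do the same with $Y$ replaced by $(Y,Z_i)$, and subtract. It holds regardless of what later seeds do, so your worry that ``the variables are no longer simple functions of $Z_i$ alone'' is a non-issue---no further conditioning on $Z_{i+1},\dots,Z_m$ is needed.

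Second, and this is the genuine gap, your counting argument (``$W$ spans at most $\tfrac{d|W|-|\partial W|}{2}$ internal edges \dots'') only handles the case where $h(i,\cdot)$ is the indicator of a set $W$. In general $h(i,\cdot)$ is an arbitrary nonnegative function supported on $W_i$, and what you must show for each fixed $i$ is
\[
\sum_{\{u,v\}\in E} \max\bigl(h(i,u),h(i,v)\bigr) \;\ge\; \beta \sum_{v\in V} h(i,v).
\]
The paper does this by a level-set decomposition: write $f = h(i,\cdot)$ as $\sum_j c_j \ind_{A_j}$ with $\supp f = A_1 \supsetneq A_2 \supsetneq \cdots$ and $c_j>0$; then $\max_{v\in e} f(v) = \sum_j c_j \ind_{e\cap A_j \neq \emptyset}$, so the ratio of the two sides is at least $\min_j |\overline{A_j}|/|A_j| \ge \beta$ since each $A_j \subseteq W_i$. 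This is precisely why the minimum defining $\beta$ runs over \emph{all} $W \subseteq W_i$, not just $W = W_i$. Your detour through mutual information is harmless (the two formulations are equivalent via $\min+\max$ equals the sum), but it obscures this step rather than clarifying it.
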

We will see that this $\beta$ is the largest coefficient for which the theorem holds. 

\subsection{Factor-of-IID processes} \label{sec:intro_fiid}

The following question arises: can we use our method to say something 
if not finitely but countably many random variables $X_v$ are given? 
In this case the sum of all entropies may be infinite. 
Therefore, to get a meaningful statement, one needs to assume some kind of invariance. 
A natural framework to consider is \emph{factor-of-IID processes}. 

Factors of IID are closely related to randomized local algorithms. 
They are also extensively studied by ergodic theory 
under the name of \emph{factors of Bernoulli shifts}. 
In short, we start with independent and identically distributed (say $[0,1]$ uniform) 
random variables $Z_v$, $v \in V$ (i.e.~the \emph{IID process}), 
then we apply a measurable function $F \colon [0,1]^V \to M^V$ 
that is required to be $\Ga$-equivariant for some group $\Ga$ acting on $V$.  
The collection of random variables $X_v$, $v \in V$, we obtain this way 
is called a \emph{factor-of-IID process}. 
Note that the joint distribution of $X_v$, $v \in V$, is $\Ga$-invariant. 
In this paper the state space $M$ will usually be finite.

An important special case, mainly because of its connection to 
random regular graphs and other large-girth regular graphs, 
is when $V$ is the vertex set of the $d$-regular tree $T_d$ 
equipped with the natural action of the automorpihsm group $\Ga = \Aut(T_d)$. 
In this setting various entropy inequalities 
have been known for factor-of-IID processes. 
The simplest one is the following edge-vertex inequality: 
\begin{equation} \label{eq:edge-vertex}
H( X_u, X_v ) \geq \frac{2(d-1)}{d} H( X_v ) \mbox{, where $(u,v)$ is an edge.}
\end{equation}
This inequality played a central role 
in a couple of intriguing results recently \cite{ev,mustazeebalint}. 
All previous proofs of \eqref{eq:edge-vertex} are based on counting arguments 
for random regular graphs or for random permutations, 
and as such they heavily build on the acyclic nature of $T_d$, 
see \cite{invtree,bowen,mustazee}. 
Further inequalities and generalizations can be found in \cite{mut_inf,entr_ineq}. 

The new approach presented in this paper 
does not assume the acyclic property of the system (only certain local expansion). 
It provides a clean new proof for \eqref{eq:edge-vertex} 
and leads to numerous new versions and generalizations. 
Once again, we want to avoid too many technicalities at this point 
so we do not state our result in its most general form. 
Theorem \ref{thm:fiid} below is concerned with transitive graphs $G$ 
satisfying some additional invariance called \emph{unimodularity}. 

There are several equivalent definitions for unimodular transitive graphs. 
The one most convenient for applications is the so-called \emph{Mass-Transport Principle} saying that 
$\sum_{v \in V} h(o,v) = \sum_{v \in V} h(v,o)$ holds for any fixed vertex $o$ of the vertex set $V$ 
and for any function $h \colon V \times V \to [0,\infty)$ that is diagonally $\Aut(G)$-invariant 
(i.e.~$h(u,v) = h(\varphi(u),\varphi(v))$ $\forall \varphi \in \Aut(G)$ $\forall u,v \in V$). 
For an equivalent definition that is easier to check, 
let $m_{u,v}$ denote the size of the orbit of $v$ under the stabilizer of $u$, that is, 
the size of the set $\{ \varphi(v) \, : \, \varphi \in \Aut(G) \, ; \, \varphi(u) = u \}$. 
Then a transitive $G$ is unimodular if and only if $m_{u,v} = m_{v,u}$ for any pair of vertices.
\begin{introthm} \label{thm:fiid}
Let $G$ be a unimodular transitive graph with vertex set $V$ and with degree $d$. 
Suppose that $X_v$, $v \in V$, is an $\Aut(G)$-factor-of-IID process 
with a finite state space. Then for an arbitrary $o \in V$ we have  
$$ \frac{1}{2} \sum_{o' \sim o} H( X_o, X_{o'} ) \geq \beta H( X_o ) \mbox{, where } 
\beta = \frac{1}{2} \left( d + \inf_{W \subset V; |W|<\infty} \frac{ | \partial W | }{|W|} \right) .$$
Here $o' \sim o$ means that $o'$ and $o$ are connected by an edge in $G$, 
that is, $o'$ runs through all neighbors of the fixed vertex $o$. 
Note that the infimum of $|\partial W| / |W|$ over finite sets $W$ 
is usually called the \emph{edge Cheeger constant} of $G$. 
\end{introthm}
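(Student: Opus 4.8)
The plan is to deduce Theorem~\ref{thm:fiid} from the finite, hypergraph version of the inequality (Theorem~\ref{thm:hypergraph_version}, which we may assume) by a truncation/averaging argument that converts the finite Cheeger-type constant into the infinite one. First I would fix a vertex $o$ and, for large $R$, consider the ball $B_R = B_R(o)$ in $G$; the $\Aut(G)$-factor-of-IID structure restricts to $B_R$ in the sense that each $X_v$ with $v \in B_R$ is a measurable function of the seeds $Z_w$ for $w$ in some (finite) neighborhood of $v$. Intersecting the seed-sets with $B_R$ gives a finite system to which Theorem~\ref{thm:hypergraph_version} (specialized as in Theorem~\ref{thm:regular}) applies, yielding
\begin{equation*}
\sum_{\{u,v\} \in E(B_R)} H(X_u, X_v) \;\geq\; \beta_R \sum_{v \in B_R} H(X_v),
\qquad
\beta_R = \tfrac{1}{2}\!\left( d + \min_{W \subseteq W_i \cap B_R} \frac{|\partial W|}{|W|} \right),
\end{equation*}
where the minimum runs over subsets of the (truncated) seed-sets. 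The key point is that as $R \to \infty$ this Cheeger-type constant converges to (or is bounded below by something converging to) $\inf_{|W|<\infty} |\partial W|/|W|$, the genuine edge Cheeger constant of $G$: every finite witness set $W$ for the infinite constant eventually sits inside $B_R$ and inside the appropriate truncated seed-set, so $\beta_R$ is eventually at least $\beta - \eps$ for any $\eps > 0$. (One must be slightly careful that $W$ also fits inside some $W_i$; but since the seeds can be taken to have full support, or one enlarges to $B_R$ with a collar, this is arranged.)

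Next I would divide the finite inequality by $|B_R|$ and use unimodularity, via the Mass-Transport Principle, to identify the normalized limits. The right-hand side $\frac{1}{|B_R|}\sum_{v \in B_R} H(X_v)$ equals $H(X_o)$ exactly by transitivity (every term is the same). For the left-hand side, $\frac{1}{|B_R|}\sum_{\{u,v\}\in E(B_R)} H(X_u,X_v)$: the function $h(u,v) = \ind_{u\sim v} H(X_u,X_v)$ is diagonally $\Aut(G)$-invariant and nonnegative, so mass transport gives $\sum_{v} h(o,v) = \sum_v h(v,o) = \sum_{o'\sim o} H(X_o,X_{o'})$, and an averaging-over-$B_R$ argument (controlling the $O(|\partial B_R|)$ boundary discrepancy, which is negligible once we also invoke that balls in a transitive graph can be taken along a Følner-type exhaustion — or, more robustly, bypass amenability by noting each edge $\{u,v\}$ in $B_R$ is counted and comparing with the mass-transport sum directly) shows
\begin{equation*}
\frac{1}{|B_R|}\sum_{\{u,v\}\in E(B_R)} H(X_u,X_v) \;\longrightarrow\; \tfrac{1}{2}\sum_{o'\sim o} H(X_o, X_{o'}).
\end{equation*}
Combining the three displays and letting $\eps \to 0$ yields $\tfrac12 \sum_{o'\sim o} H(X_o,X_{o'}) \geq \beta\, H(X_o)$.

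The main obstacle is the boundary term: $G$ need not be amenable, so $|\partial B_R|/|B_R|$ need not tend to $0$, and then the naive "divide by $|B_R|$" of the finite inequality loses exactly the amount we are trying to prove. The right fix is not to average over a single ball but to exploit unimodularity directly: instead of summing the finite inequality once, one sums it in an $\Aut(G)$-equivariant way — e.g. replace $B_R$ by a translate, integrate over the (possibly non-compact) group using a symmetric weighting, or equivalently phrase the whole argument on a large random finite quotient / Benjamini–Schramm approximation of $G$. In such a finite transitive quotient $G'$ the inequality $\sum_{E(G')} H(X_u,X_v) \geq \beta' \sum_{V(G')} H(X_v)$ holds with $\beta'$ governed by the Cheeger constant of $G'$, which converges to that of $G$ under local convergence; dividing by $|V(G')|$ now has no boundary term at all, and mass transport on $G'$ is just the statement that every vertex looks the same. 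Making this quotient/approximation step precise — ensuring the factor-of-IID process descends (approximately) to the quotient and that the Cheeger constants converge in the right direction — is where the real work lies; the entropy manipulations themselves are routine given Theorem~\ref{thm:hypergraph_version}.
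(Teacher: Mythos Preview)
Your first approach (apply the finite inequality to $B_R$, divide by $|B_R|$, let $R\to\infty$) fails for exactly the reason you identify: on a non-amenable graph the boundary term is of the same order as the main term, so nothing is gained. Your proposed fix, however, has a genuine gap. Passing to a finite transitive quotient $G'$ (or a Benjamini--Schramm approximation) that locally looks like $G$ is tantamount to assuming $G$ is sofic, which is a famous open problem already for Cayley graphs and is certainly not available for an arbitrary unimodular transitive graph. Moreover, your claim that ``the Cheeger constant of $G'$ converges to that of $G$ under local convergence'' is not correct as stated: the global Cheeger constant of a finite graph is not a local quantity. What actually enters Theorem~\ref{thm:regular} is the minimum of $|\partial W|/|W|$ over subsets $W$ of the seed-sets $W_i=B_{R'}(v)$, which \emph{is} local provided the girth of $G'$ exceeds $2R'$ --- but arranging such $G'$ is precisely the sofic approximation you cannot assume.

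The paper avoids this entirely: it does not reduce to the finite theorem at all but reruns the telescoping argument directly on the infinite graph. One fixes a random order on the ball $B_R(v)$ and defines the expected information gain $h(u,v)$ about $X_v$ when $Z_u$ is revealed; this $h$ is diagonally $\Aut(G)$-invariant by construction. The inequality $H(X_o,X_{o'})\ge\sum_u\max(h(u,o),h(u,o'))$ is summed over $o'\sim o$, and then the Mass-Transport Principle is applied to swap the roles of $u$ and $o$. For each fixed $u$ the resulting sum $\sum_{o}\sum_{o'\sim o}\max(h(u,o),h(u,o'))$ is exactly the finite-graph quantity to which the Claim inside the proof of Theorem~\ref{thm:weighted_hg} applies (since $h(u,\cdot)$ is supported on $B_R(u)$), yielding the factor $\beta$. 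A second application of mass transport converts $\sum_o h(u,o)$ back into $H(X_o)$. No finite approximation, no boundary term, and unimodularity is used only as the Mass-Transport Principle. The passage from block factors to general factors is then just weak approximation.
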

In particular, for $G=T_d$ the Cheeger constant is $d-2$ 
and we get back the original edge-vertex inequality \eqref{eq:edge-vertex}. 
See Theorem \ref{thm:general} for our general result, 
which is concerned with quasi-transitive group actions 
and joint entropies for arbitrary finite subsets. 

\subsection{Proof method}
In the finite case our strategy is to reveal the independent seeds $Z_1, Z_2, \ldots, Z_m$ one by one 
and study how much information is gained about $X_e$ and $X_v$ at each step. 
After step $i$, the conditional entropy $H\left( X_e~|~Z_1,\ldots,Z_i \right)$ 
indicates the expected ``amount of uncertainty'' remaining. 
So the information gained at step $i$ can be defined as the difference 
$$ H\left( X_e~|~Z_1,\ldots,Z_{i-1} \right) - H\left( X_e~|~Z_1,\ldots,Z_i \right) .$$ 
It can be seen that this difference must be at least as much as 
the same difference for $X_v$ provided that the vertex $v$ is incident to $e$. 
This observation, combined with the fact that 
the information gain at step $i$ is zero for vertices outside $W_i$, 
leads to Theorem \ref{thm:regular}. 

As for the infinite setting, one needs to reveal the seeds 
in a random order and take expectation. This ensures that 
we have the required invariance to use the Mass-Transport Principle 
in the argument.

In fact, the approach described above goes beyond entropy. 
There are other ways of ``measuring the uncertainty'' in random variables. 
If such an uncertainty function satisfies a certain convexity criterion, 
then we can simply use this function---instead of entropy---in our arguments 
and obtain further inequalities of similar flavor. For example, we can get 
a bound for the correlation of neighbors in terms of the edge expansion.
We will discuss this in detail in Section \ref{sec:beyond}.  

Telescoping sums of conditional entropies, 
in particular of those in random order,
have been used in the literature before. 
The recent notion of \emph{percolative entropy} 
is also based on this idea, see \cite{austin_podder,seward}. 

\subsection{Applications}
As we have mentioned, these inequalities can be used to get constraints 
for what can be achieved by factor-of-IID processes or randomized local algorithms. 
They can be applied to various problems; 
in this paper we will focus on questions regarding independent sets.

Upper bounds have been known for the density of factor-of-IID independent sets 
in Cayley graphs. An infinite version of the Hoffman bound provides a non-trivial 
bound for non-amenable graphs \cite[Proposition 3.3]{lyons_nazarov}. 
The same paper explores possible improvements as well \cite[Section 4]{lyons_nazarov}.
Here, instead of using the spectral radius or other spectral parameters, 
we give a bound in terms of the Cheeger constant.
\begin{introthm} \label{thm:indset_bound}
Suppose that $G$ is a Cayley graph (or more generally, a unimodular transitive graph) 
with vertex set $V$ and degree $d$. 
Let $\tau$ denote the normalized edge Cheeger constant: 
$$ \tau = \inf_{W \subset V; |W|<\infty} \frac{ | \partial W | }{d|W|} .$$ 
Then the density of any factor-of-IID independent set in $G$ 
is at most $\varphi^{-1}( \tau )$, where $\varphi^{-1}$ is the inverse of the 
following function: 
$$ \varphi(q) = \frac{ - 2q\log q - (1-2q)\log(1-2q) }{ - q\log q - (1-q)\log(1-q) } - 1 
\mbox{ for } q \in (0,1/2) .$$ 
It is easy to see that $\varphi$ continuously and monotone decreasingly maps 
$(0,1/2)$ onto $(0,1)$, and hence the inverse exists. 
\end{introthm}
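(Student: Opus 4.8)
\textbf{Proof proposal for Theorem \ref{thm:indset_bound}.}

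The plan is to apply Theorem \ref{thm:fiid} to a well-chosen factor-of-IID process built from a given factor-of-IID independent set. Let $\mathcal I \subseteq V$ be a factor-of-IID independent set with density $q = \P(o \in \mathcal I) \in (0,1/2]$; we may assume $q < 1/2$, since the trivial bound $q \le 1/2$ handles the boundary, and we want to show $q \le \varphi^{-1}(\tau)$, equivalently $\varphi(q) \le \tau$ (using that $\varphi$ is monotone decreasing from $(0,1/2)$ onto $(0,1)$ and that $\tau \in [0,1)$). For each vertex $v$ set $X_v = \ind_{v \in \mathcal I}$, a $\{0,1\}$-valued factor-of-IID process with finite state space. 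Then $H(X_o) = h(q)$ where $h(q) = -q\log q - (1-q)\log(1-q)$ is the binary entropy. For an edge $o \sim o'$, the pair $(X_o, X_{o'})$ takes the value $(1,1)$ with probability $0$ (independence of $\mathcal I$), $(1,0)$ and $(0,1)$ each with probability $q$, and $(0,0)$ with probability $1-2q$; hence $H(X_o, X_{o'}) = -2q\log q - (1-2q)\log(1-2q)$, which is exactly the numerator in the definition of $\varphi$, so $H(X_o,X_{o'}) = \varphi(q) \cdot h(q) + h(q) = (\varphi(q)+1)\,h(q)$.

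Now I plug this into Theorem \ref{thm:fiid}. The left-hand side is $\tfrac12 \sum_{o' \sim o} H(X_o,X_{o'}) = \tfrac12 \cdot d \cdot (\varphi(q)+1)\,h(q)$, since every one of the $d$ edge-entropies equals the same value by transitivity. The right-hand side is $\beta H(X_o) = \tfrac12\big(d + \inf_W |\partial W|/|W|\big) h(q) = \tfrac12\big(d + d\tau\big) h(q) = \tfrac{d}{2}(1+\tau)\,h(q)$. Since $q < 1/2$ we have $h(q) > 0$, so we may cancel $\tfrac{d}{2} h(q)$ from both sides of the inequality $\tfrac{d}{2}(\varphi(q)+1)h(q) \ge \tfrac{d}{2}(1+\tau)h(q)$, obtaining $\varphi(q) + 1 \ge 1 + \tau$, i.e. $\varphi(q) \ge \tau$. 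Applying the monotone-decreasing inverse $\varphi^{-1}$ (and noting $\varphi(q) < 1$ automatically, while if $\tau = 0$ the bound $q \le \varphi^{-1}(0) = 1/2$ is vacuous) yields $q \le \varphi^{-1}(\tau)$, as desired.

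The only genuine content beyond Theorem \ref{thm:fiid} is this algebraic bookkeeping, so there is no serious obstacle; the one point deserving a line of care is the claim that $\varphi$ maps $(0,1/2)$ continuously and monotonically decreasingly onto $(0,1)$, which the statement already grants us. I would just double-check the boundary behavior: as $q \to 0^+$, a short expansion gives numerator $\sim -2q\log q$ and denominator $\sim -q\log q$, so $\varphi(q) \to 2 - 1 = 1$; as $q \to (1/2)^-$, the denominator tends to $h(1/2) = \log 2$ while the numerator tends to $-2\cdot\tfrac12\log\tfrac12 - 0 = \log 2$, so $\varphi(q) \to 1 - 1 = 0$. Monotonicity in between is a one-variable calculus check that I would relegate to a footnote or omit, citing the remark in the theorem statement. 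Thus the whole proof is: interpret the independent set as a $0/1$ process, compute the two relevant entropies explicitly, substitute into Theorem \ref{thm:fiid}, cancel, and invert $\varphi$.
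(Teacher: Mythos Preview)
Your approach is essentially identical to the paper's (Section \ref{sec:appl}): compute the vertex and edge entropies for the indicator process of the independent set, plug into Theorem \ref{thm:fiid}, cancel $\tfrac{d}{2}h(q)$, and invert $\varphi$. The only blemish is a sign slip in the first paragraph where you write ``equivalently $\varphi(q) \le \tau$''---since $\varphi$ is decreasing the correct equivalence is $\varphi(q) \ge \tau$, which is precisely what your second paragraph derives and uses, so the argument itself is unaffected.
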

It is, of course, very hard to assess the strength of this result 
without further details or examination. 
We will discuss the behaviour of $\varphi^{-1}$ in Section \ref{sec:appl}. 
Note, however, that $G$ is non-amenable if and only if $\tau > 0$, 
and the obtained bound $\varphi^{-1}( \tau )$ is strictly less than $1/2$ in that case. 
If $G$ is the $d$-regular tree $T_d$, then we get back the original Bollob\'as bound: 
$(2 \log d)/d$ asymptotically as $d \to \infty$. The exact value of the Cheeger constant
is also known for regular tessellations of the hyperbolic plane 
\cite{haggstrom_jonasson_lyons,higuchi_tomoyuki} 
so Theorem \ref{thm:indset_bound} can be applied to obtain good quantitative bounds 
for these lattices. Asymptotically we get the following.
\begin{introcor} \label{cor:tessellation}
Consider the regular tessellation of the hyperbolic plane 
by polygons with $k$ sides and with $d$ polygons at each vertex 
for some positive integers $k$ and $d$ satisfying $(k-2)(d-2) > 4$. 
Let $G_{d,k}$ denote the corresponding planar graph, that is, 
each face is a $k$-cycle and each degree is $d$. 
For any fixed $k$, the density of a factor-of-IID 
independent set in $G_{d,k}$ is at most 
$$ \left( 2 + \frac{2}{k-2} + o(1) \right) \frac{\log d}{d} \mbox{ as } d \to \infty .$$ 
\end{introcor}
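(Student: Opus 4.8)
The plan is to combine Theorem~\ref{thm:indset_bound} with the known exact value of the edge Cheeger constant for the regular tessellation $G_{d,k}$, and then extract the $d\to\infty$ asymptotics of $\varphi^{-1}$. First I would recall (from \cite{haggstrom_jonasson_lyons,higuchi_tomoyuki}) the explicit formula for the vertex (or edge) isoperimetric constant of $G_{d,k}$; in the relevant normalization this gives the edge Cheeger constant $|\partial W|/|W|$ minimized over finite $W$, and dividing by $d$ yields $\tau = \tau(d,k)$. The key quantitative input is the large-$d$ behaviour of this constant: for fixed $k$ one has $\tau(d,k) \to 1$ as $d\to\infty$, and more precisely $1-\tau(d,k)$ is of order $1/d$ — the expanding tessellation looks, locally near the boundary, more and more like a tree but with the $k$-gon faces forcing a correction. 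I would make the expansion $1 - \tau(d,k) = \frac{c_k + o(1)}{d}$ explicit, identifying $c_k$ in terms of $k$; the factor $2 + 2/(k-2)$ in the statement should emerge as $c_k = 1 + \frac{1}{?}$ or similar once the arithmetic is done, so pinning down $c_k$ exactly is the first concrete task.

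Next I would analyze $\varphi^{-1}$ near the endpoint. Since $\varphi$ maps $(0,1/2)$ decreasingly onto $(0,1)$, we have $\varphi^{-1}(\tau)\to 0$ as $\tau\to 1$, i.e.\ as $q\to 0^+$. From the definition
$$
\varphi(q) = \frac{-2q\log q - (1-2q)\log(1-2q)}{-q\log q-(1-q)\log(1-q)} - 1,
$$
a Taylor expansion as $q\to 0^+$ gives numerator $\sim -2q\log q + 2q$ and denominator $\sim -q\log q + q$, so $\varphi(q) + 1 \sim 2 + O(1/\log(1/q))$, hence $1-\varphi(q) \sim \frac{C}{\log(1/q)}$ for an explicit constant $C$ (one should track the subleading terms carefully: $\varphi(q) = 1 - \frac{1+o(1)}{\log(1/q)}\cdot(\text{const})$). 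Inverting, if $\tau = 1 - \frac{c_k+o(1)}{d}$ then $\log(1/q) \sim \frac{d}{c_k}\cdot(\text{const})$, so $q = \varphi^{-1}(\tau) = e^{-\Theta(d)}$ — but that is far too small, which tells me I have mis-estimated either $\tau$ or the expansion of $\varphi$. The correct picture must be that $\tau(d,k)$ does \emph{not} tend to $1$ but rather that the \emph{un-normalized} Cheeger constant behaves like $d - 2 - \frac{2}{k-2}+o(1)$ (matching $\tau_{\mathrm tree}$: for $T_d$ the Cheeger constant is $d-2$, giving the Bollob\'as bound $(2\log d)/d$). So in fact $1 - \tau(d,k) = \frac{2 + 2/(k-2) + o(1)}{d}$, and then $\log(1/q) \sim \tfrac12 d/(2+2/(k-2))\cdot$\dots — I would redo this step so that $q \sim (2 + \tfrac{2}{k-2}+o(1))\frac{\log d}{d}$ falls out, which is exactly the claimed bound. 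Concretely: set $q = \alpha \frac{\log d}{d}$, compute $\log(1/q) = \log d - \log\log d - \log\alpha + \log d = (1+o(1))\log d$, so $\varphi(q) = 1 - \frac{(1+o(1))}{\log d}\cdot(\text{the const from the }\varphi\text{ expansion})$; matching this to $1-\tau = \frac{(2+2/(k-2)+o(1))}{d}$ forces... here the arithmetic must be carried out honestly, and I expect the constant from the $\varphi$-expansion to be exactly $1$ so that $\alpha = 2 + \frac{2}{k-2}$.

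The main obstacle, then, is twofold and both parts are essentially bookkeeping rather than deep: (i) locating and correctly normalizing the exact Cheeger-constant formula for $G_{d,k}$ from the cited references and extracting its $1/d$ expansion with the precise constant $2+2/(k-2)$; and (ii) performing the asymptotic inversion of $\varphi$ carefully enough near $q=0$ to confirm that the leading constant transfers with coefficient exactly $1$. A subtle point worth flagging is the uniformity of the $o(1)$ terms: since $k$ is fixed and only $d\to\infty$, we may treat $k$ as a constant throughout, so no uniformity in $k$ is needed and the two expansions compose without difficulty. Finally, one should verify the hypothesis $(k-2)(d-2)>4$ of Theorem~\ref{thm:indset_bound}'s applicability (non-amenability, $\tau>0$) holds for all large $d$, which is immediate for fixed $k\geq 3$.
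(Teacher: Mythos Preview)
Your approach is exactly the paper's: apply Theorem~\ref{thm:indset_bound} using the known Cheeger constant of $G_{d,k}$, then invert the asymptotics of $\varphi$ near $q=0$. The paper simply quotes the exact value
\[
|\partial W|/|W| \ \longrightarrow\ (d-2)\sqrt{1-\tfrac{4}{(k-2)(d-2)}}
\]
from \cite{haggstrom_jonasson_lyons,higuchi_tomoyuki}, normalizes to get $\tau = 1 - \big(2+\tfrac{2}{k-2}\big)/d + o(1/d)$, and plugs into the asymptotic $\varphi^{-1}(1-x)\sim -x\log x$ to obtain the stated bound.

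Where your write-up goes astray is in the expansion of $\varphi$, not in $\tau$. Your estimate $1-\varphi(q)\sim C/\log(1/q)$ is one order too crude: keeping the next term in both numerator and denominator gives
\[
\varphi(q)=1+\frac{q}{\log q}+O\!\left(\frac{q}{(\log q)^2}\right),
\qquad\text{i.e.}\qquad 1-\varphi(q)\sim \frac{q}{\log(1/q)},
\]
so inverting yields $\varphi^{-1}(1-x)\sim -x\log x$. With $x=(2+2/(k-2))/d$ this gives $q\sim\big(2+\tfrac{2}{k-2}\big)\frac{\log d}{d}$ directly. Your self-correction ``$\tau$ does not tend to $1$'' is a mis-diagnosis: $\tau$ \emph{does} tend to $1$, and your formula $1-\tau=(2+2/(k-2)+o(1))/d$ is already correct; the apparent contradiction came solely from the dropped factor of $q$ in $1-\varphi(q)$. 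Once that is fixed, both of your ``bookkeeping'' obstacles dissolve and no further work is needed.
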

To compare this to the spectral approach, we note that 
the Hoffman bound is, at best, of order $1/\sqrt{d}$ for any $d$-regular infinite graph, 
which is significantly weaker than the result above. 
In general, we cannot say that our bound (given by Theorem \ref{thm:indset_bound}) 
is always better: the Cheeger inequalities relate the spectral radius to the Cheeger constant, 
and these inequalities do not exclude the possibility that the spectral approach 
gives slightly better results but this is probably not the typical case. 
Moreover, we rarely have exact values or even good estimates for the spectral radius of 
the adjacency operator. It tends to be a somewhat easier task to find the Cheeger constant. 

Finally, we mention another application that shows that we can get results 
even in amenable graphs using local (or small set) expansion.
\begin{introthm} \label{thm:Zn}
Let $X$ be a factor-of-IID process with finite radius $R$ 
over the integer lattice $\IZ^n$ taking values $0$ or $1$. 
If $X$ defines an independent set (i.e.\ the values of two neighboring vertices cannot be both $1$), 
then the density of this independent set (i.e.\ the probability that a given vertex takes value $1$) 
is bounded above by 
$$ \frac{1}{2} - \frac{cn}{R\log(R/n)} $$
provided that $R>Cn$, where $c$ and $C$ are absolute constants. 
\end{introthm}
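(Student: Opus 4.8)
The plan is to apply Theorem \ref{thm:indset_bound} (or rather, the general factor-of-IID entropy inequality behind it, Theorem \ref{thm:fiid}/\ref{thm:general}) to the integer lattice $\IZ^n$, where the relevant expansion input is no longer the ordinary Cheeger constant (which vanishes for amenable $\IZ^n$) but a \emph{local} or \emph{small-set} version: the finite-radius-$R$ assumption means each $X_v$ depends only on the seeds $Z_w$ with $w$ within distance $R$ of $v$, so the only sets $W$ that can arise as (subsets of) the support sets $W_i$ of a single seed have diameter $\le 2R$, hence volume $|W| \le (2R+1)^n$. For such sets $\IZ^n$ does expand: by the discrete isoperimetric inequality on $\IZ^n$, a set of volume $N$ has edge boundary at least on the order of $N^{(n-1)/n}$, so $|\partial W|/|W| \gtrsim |W|^{-1/n} \gtrsim R^{-1}$ once $|W| \le (CR)^n$. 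Plugging this $\beta = \tfrac12\big(2n + \Omega(1/R)\big)$ into the entropy inequality and then running the independent-set optimization exactly as in Theorem \ref{thm:indset_bound} — i.e.\ maximizing the density $q$ subject to $\varphi(q) \ge \tau$ with $\tau = |\partial W|/(d|W|) = \Omega(1/(Rn))$ — will give the bound, provided we track the constants carefully near $q = 1/2$.

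The key steps, in order, would be: (1) unwind the finite-radius hypothesis to see that each seed $Z_i$ (indexed, say, by a lattice site, since a radius-$R$ factor over $\IZ^n$ uses the IID field itself as seeds) is seen only by the ball of radius $R$ about that site, so in the hypergraph expansion quantity from Theorem \ref{thm:general} the competitor sets $W$ satisfy $W \subseteq W_i$ with $|W_i| = |B_R| \le (2R+1)^n$; (2) invoke the (loop/edge) isoperimetric inequality on $\IZ^n$ — the clean Bollob\'as–Leader or Loomis–Whitney-type bound $|\partial W| \ge 2n\,|W|^{1-1/n}$ suffices up to constants — to conclude $|\partial W|/|W| \ge 2n/(2R+1)$, whence the normalized small-set Cheeger constant is $\tau \ge 1/((2R+1)R)$ up to an absolute constant, roughly $c'/(nR)$ after dividing by $d=2n$... wait, more carefully $\tau = |\partial W|/(2n|W|) \ge 1/(2R+1) \ge c'/R$; (3) feed this into the factor-of-IID entropy inequality to get $H(X_u,X_v) \ge \tfrac{2(n-1)}{n}\cdot(1+\varepsilon_R)\,H(X_v)$ with $\varepsilon_R = \Omega(1/(nR))$, or directly use the density bound $q \le \varphi^{-1}(\tau)$; (4) perform the asymptotic inversion of $\varphi$ near its left endpoint — compute the expansion $\varphi(1/2 - t) = \alpha t \log(1/t) + O(t)$ for some explicit constant $\alpha$ as $t \to 0^+$ — and solve $\varphi(1/2 - t) = \tau$ to extract $t \asymp \tau/\log(1/\tau) \asymp \big(R\log(R/n)\big)^{-1}\cdot n^{-1}$ up to constants, which is the claimed $cn/(R\log(R/n))$ after bookkeeping.

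The main obstacle I expect is step (4): pinning down the precise asymptotics of $\varphi^{-1}$ near $\tau = 0$ with enough uniformity that the implied constants are genuinely absolute (independent of $n$), and making sure the $n$-dependence lands in the right place. The function $\varphi(q)$ near $q = 1/2$ behaves like (numerator $\sim 4t\log(1/t)$)/(denominator $\to \log 2$), so $\varphi(1/2-t) \sim (4/\log 2)\,t\log(1/t)$, and inverting gives $t \sim \tfrac{\log 2}{4}\,\tau/\log(1/\tau)$; the subtlety is that here $\tau$ itself depends on both $R$ and $n$ (roughly $\tau \asymp 1/(nR)$, so $\log(1/\tau) \asymp \log(nR) \asymp \log(R/n) + \log(n^2)$), and one must argue that in the regime $R > Cn$ the $\log(nR)$ and $\log(R/n)$ differ only by a constant factor or absorb the difference into $c$. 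A secondary technical point is confirming that Theorem \ref{thm:general} really does apply with $\IZ^n$'s translation action as a quasi-transitive unimodular action and that "finite radius $R$" translates exactly into the support-set condition needed — this should be routine but must be stated. The isoperimetry input in step (2) is entirely standard, so it is not where the difficulty lies.
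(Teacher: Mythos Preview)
Your overall strategy is exactly the paper's: apply the finite-radius entropy inequality (Theorem~\ref{thm:block_factor_on_transitive}) on $\IZ^n$, extract a lower bound on the normalized local expansion $\tau = \min_{W\subseteq B_R}|\partial W|/(2n|W|)$, and invert $\varphi$ near $q=1/2$. The paper carries this out in a single sentence, asserting $\tau \ge cn/R$.

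The gap is in your step~(2), and it costs you precisely the factor of $n$ in the numerator of the final bound. You bound $|W|\le|B_R|\le(2R+1)^n$ and conclude $\tau \ge |W|^{-1/n}\ge 1/(2R+1)\asymp 1/R$. Feeding $\tau\asymp 1/R$ into $\varphi^{-1}$ gives only $q\le \tfrac12 - c/(R\log R)$, not $\tfrac12 - cn/(R\log(R/n))$. (Your step~(4) bookkeeping then becomes inconsistent: you write $\tau\asymp 1/(nR)$ in one place and $\tau\ge c'/R$ in another, and the asserted $t\asymp n^{-1}/(R\log(R/n))$ follows from neither.) What is needed is $\tau \ge cn/R$. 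The missing ingredient is that the graph-distance ball $B_R$ in $\IZ^n$ is the $\ell^1$-ball, whose volume for $R>Cn$ satisfies $|B_R|\le C'(2R)^n/n!$; by Stirling $|B_R|^{1/n}\le C''R/n$, so the Loomis--Whitney bound $|\partial W|\ge 2n|W|^{1-1/n}$ now yields
\[
\tau \;\ge\; \min_{W\subseteq B_R}|W|^{-1/n} \;=\; |B_R|^{-1/n}\;\ge\; \frac{cn}{R}.
\]
With this corrected $\tau$, your step~(4) gives $t\asymp \tau/\log(1/\tau)\asymp (n/R)/\log(R/n)$, which is the theorem. In short: use the $\ell^1$-ball volume, not the enclosing cube, and the $n$ appears where it should.
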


\subsection*{Outline of the paper}
In Section \ref{sec:2} we present and prove our entropy inequalities for a finite system of random variables, 
while in Section \ref{sec:3} we deal with the infinite case under the unimodularity condition. 
Section \ref{sec:beyond} explores possible generalizations, while Section \ref{sec:appl} discusses applications. 
For the reader's convenience we also include an Appendix (Section \ref{sec:app}) 
with some definitions and lemmas regarding entropy and conditional entropy.

\section{Finite setting} \label{sec:2}

\subsection{Expansion for hypergraphs}
Let $V$ be a finite set and let $G$ be a hypergraph over $V$, 
where the set of hyperedges is denoted by $E$. 
In other words, each hyperedge $e \in E$ is a subset of $V$. 
First we define a certain hyperedge expansion that will appear 
as a coefficient in our entropy inequality. 
\begin{definition}
Given a set $W \subset V$, we define its \emph{closure} $\overline{W}$ as the set of hyperedges incident to $W$:
$$ \overline{W} \defeq \left\{ e \in E \, : \, e \cap W \neq \emptyset \right\} .$$ 
Then the \emph{hyperedge expansion} of $G$ with respect to the sets $W_1, \ldots, W_m \subset V$ is defined as 
\begin{equation*} 
\beta \defeq \min_{1 \leq i \leq m; W \subseteq W_i} \frac{| \overline{W} |}{|W|} . 
\end{equation*}
More generally, we will consider hypergraphs with a nonnegative edge weight assigned to each $e \in E$, 
in which case we will need to replace $| \overline{W} |$ with the sum of weights in $\overline{W}$. 
\end{definition}
\begin{remark} \label{rm:expansions}
Note that the most standard edge expansion definition for graphs 
uses the \emph{edge boundary} $\partial W$ consisting of edges 
with one endpoint in $W$ and one endpoint in $V \sm W$. 
If $G$ is a $d$-regular graph, then the two notions are essentially the same: 
$$ d |W| = 2\left( \# \mbox{edges inside } W \right) + | \partial W | \mbox{ and } 
| \overline{W} | =\left( \# \mbox{edges inside } W \right) + | \partial W | ,$$
and hence 
$$ \beta = \min_{1 \leq i \leq m; W \subseteq W_i} \frac{ d|W| + | \partial W | }{2|W|} 
= \frac{1}{2} \left( d + \min_{1 \leq i \leq m; W \subseteq W_i} \frac{| \partial W |}{|W|} \right) .$$
\end{remark}

\subsection{An entropy inequality for finitely many random variables}
Let $V$ be a finite set and $X_v$, $v \in V$, be random variables, 
each $X_v$ taking finitely many values. 
For any subset $U \subseteq V$ one can consider the joint entropy of 
the random variables $X_v$, $v \in U$. This is simply defined as the Shannon entropy 
of the joint distribution of $X_U \defeq \left( X_v \right)_{v \in U}$ 
and will be denoted by $H(X_U)$. 
Usually we will think of such a subset $U$ as a hyperedge and use the notation $e \subseteq V$. 

Throughout this section we will work under the following assumption.
\begin{assumption} \label{assumption}
Let $Z_1, \ldots, Z_m$ be independent random variables. 
For each $Z_i$ a subset $W_i \subseteq V$ is given, 
and for every $v \in V$ we assume that $X_v$ 
is a measurable function of $Z_i, v \in W_i$. 
\end{assumption}
An equivalent formulation would be the following. 
Let $\IW$ denote the set of all subsets of all $W_i$. 
Then given countably many independent random bits, 
each bit is sent to $X_v$, $v \in W$, for some $W \in \IW$, 
and every $X_v$ is a measurable function of the bits received. 

For random variables $X_v$ obtained as above, 
the next theorem bounds any given sum of joint entropies in terms of the sum of the individual entropies. 
Note that Shearer's inequality \eqref{eq:shearer} provides a lower bound for the same sum 
in terms of the joint entropy of all $X_v$. 
\begin{theorem}[Hypergraph version] \label{thm:hypergraph_version} 
Under Assumption \ref{assumption} the following inequality is true for any set $E$ of hyperedges over $V$: 
\begin{equation*}
\sum_{e \in E} H(X_e) \geq \beta \sum_{v \in V} H(X_v) 
\mbox{, where } \beta = \min_{1 \leq i \leq m; W \subseteq W_i} 
\frac{\left| \overline{W} \right|}{|W|} .
\end{equation*}
Recall that $\overline{W}$ is defined as $\{ e \in E : e \cap W \neq \emptyset \}$.
\end{theorem}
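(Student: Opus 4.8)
The plan is to follow the ``reveal the seeds one by one'' strategy sketched in the introduction. Fix an arbitrary ordering $Z_1, \ldots, Z_m$ of the seeds, and for $0 \le i \le m$ write $\mathcal{F}_i = \sigma(Z_1,\ldots,Z_i)$. For any subset $U \subseteq V$ define the conditional joint entropy $H(X_U \mid \mathcal{F}_i)$, which decreases from $H(X_U)$ at $i=0$ down to $H(X_U \mid Z_1,\ldots,Z_m) = 0$ at $i=m$ (since every $X_v$ is a measurable function of all the seeds). Telescoping, $H(X_U) = \sum_{i=1}^m \big( H(X_U \mid \mathcal{F}_{i-1}) - H(X_U \mid \mathcal{F}_i) \big)$; abbreviate the $i$-th summand as $\Delta_i(U)$, the ``information about $X_U$ gained at step $i$.'' The first step is to record the two elementary properties of $\Delta_i$: first, $\Delta_i(U) = 0$ whenever $U \cap W_i = \emptyset$, because then $X_U$ is independent of $Z_i$ given $\mathcal{F}_{i-1}$ (all the seeds $X_U$ depends on lie in $\{Z_1,\ldots,Z_{i-1}\} \cup \{Z_j : j > i\}$, which is conditionally independent of $Z_i$); second, monotonicity $\Delta_i(U) \le \Delta_i(U')$ when $U \subseteq U'$, which I expect to follow from the submodularity / ``conditioning reduces entropy'' inequalities collected in the Appendix — concretely, $H(X_{U'}\mid \mathcal F_{i-1}) - H(X_U \mid \mathcal F_{i-1}) = H(X_{U'\sm U}\mid X_U,\mathcal F_{i-1}) \ge H(X_{U'\sm U}\mid X_U,\mathcal F_{i}) = H(X_{U'}\mid \mathcal F_{i}) - H(X_U \mid \mathcal F_{i})$, and rearranging gives $\Delta_i(U') \ge \Delta_i(U)$.

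With these in hand, the heart of the argument is a step-by-step (i.e.\ for each fixed $i$) inequality
\begin{equation*}
\sum_{e \in E} \Delta_i(e) \ \ge\ \beta \sum_{v \in V} \Delta_i(v).
\end{equation*}
Summing this over $i = 1, \ldots, m$ and telescoping each side yields exactly $\sum_{e\in E} H(X_e) \ge \beta \sum_{v\in V} H(X_v)$, which is the theorem. So everything reduces to the per-step inequality. Here is where the hyperedge expansion $\beta$ enters. Fix $i$ and set, for each $v$, $a_v \defeq \Delta_i(v) \ge 0$; note $a_v = 0$ for $v \notin W_i$ by the first property, so the support of $(a_v)$ lies inside $W_i$. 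For each hyperedge $e$, monotonicity gives $\Delta_i(e) \ge \max_{v \in e} a_v$. Thus it suffices to show
\begin{equation*}
\sum_{e \in E} \max_{v \in e} a_v \ \ge\ \beta \sum_{v \in V} a_v
\end{equation*}
for every nonnegative vector $(a_v)_{v\in V}$ supported on $W_i$. This is a purely combinatorial statement about the hypergraph $(V,E)$ and the set $W_i$, with no probability left in it.

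To prove that combinatorial inequality I would use the ``layer-cake'' / coarea decomposition: write $a_v = \int_0^\infty \ind(a_v > t)\,dt$, and for $t > 0$ let $W(t) \defeq \{ v : a_v > t \} \subseteq W_i$. Then $\sum_v a_v = \int_0^\infty |W(t)|\,dt$, while $\max_{v\in e} a_v = \int_0^\infty \ind\big(e \cap W(t) \neq \emptyset\big)\,dt$, so $\sum_{e\in E} \max_{v\in e} a_v = \int_0^\infty |\overline{W(t)}|\,dt$. Since each $W(t)$ is a subset of $W_i$, the definition of $\beta$ gives $|\overline{W(t)}| \ge \beta\,|W(t)|$ for every $t$ (when $W(t) = \emptyset$ both sides are $0$), and integrating over $t$ finishes it. The weighted-hypergraph version goes through verbatim with $|\overline{W}|$ replaced by the total weight on $\overline{W}$. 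The main obstacle is really just the careful verification of the monotonicity property $\Delta_i(U) \le \Delta_i(U')$ and of the vanishing property $\Delta_i(U)=0$ for $U \cap W_i = \emptyset$ — these are the only places conditional-independence and entropy subadditivity are used, and they need the Appendix lemmas; the rest is the clean layer-cake reduction above. One should also double check the edge cases (hyperedges $e$ with $e \cap W_i = \emptyset$ contribute $0$ to the left side as well, consistent with the bound) and that the ordering of the seeds is immaterial, as the final telescoped inequality does not reference it.
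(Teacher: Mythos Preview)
Your proposal is correct and follows essentially the same approach as the paper: reveal the seeds one by one, define the per-step information gain $\Delta_i(U)$, use the monotonicity $\Delta_i(U)\le \Delta_i(U')$ for $U\subseteq U'$ (the paper packages this as Lemma~\ref{lem:info_gained}), observe that $v\mapsto \Delta_i(v)$ is supported on $W_i$, and reduce to the combinatorial inequality $\sum_{e}\max_{v\in e}a_v\ge \beta\sum_v a_v$ for nonnegative $(a_v)$ supported on $W_i$. The only cosmetic difference is that the paper proves this last step by writing $f=\sum_j c_j\ind_{A_j}$ with nested level sets $A_1\supsetneq A_2\supsetneq\cdots$, whereas you phrase the identical decomposition as the layer-cake integral $a_v=\int_0^\infty \ind(a_v>t)\,dt$; on a finite $V$ these are literally the same argument.
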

When our hypergraph is a $d$-regular graph, 
then the coefficient $\beta$ can be expressed using the standard edge expansion 
with respect to subsets of $W_i$ (see Remark \ref{rm:expansions}) 
so in that case we get back Theorem \ref{thm:regular} stated in the introduction. 

Also note that in the $d$-regular case $\beta = d/2$ would be a trivial lower bound 
following from the fact that $H(X_u,X_v) \geq ( H(X_u) + H(X_v) )/2$. 
Similarly, a trivial upper bound ($\beta=d$) can be deduced using $H(X_u,X_v) \leq H(X_u) + H(X_v)$. 

Finally, we state and prove the most general version of our inequality 
where nonnegative weights are assigned to the hyperedges. 
In fact, we will assign a weight to each subset $U \subseteq V$, 
thinking of zero weights as non-edges. 
\begin{theorem}[Weighted hypergraph version] \label{thm:weighted_hg}
Suppose that to each subset $U \subseteq V$ 
a nonnegative weight $\alpha_U \in [0, \infty)$ is assigned. 
Then, under Assumption \ref{assumption}, the following inequality holds: 
\begin{equation} \label{eq:weighted}
\sum_{U \subseteq V} \alpha_U H(X_U) \geq \beta \sum_{v \in V} H(X_v) 
\mbox{, where } \beta = \min_{1 \leq i \leq m; W \subseteq W_i} 
\frac{\sum_{U \subseteq V; U \cap W \neq \emptyset} \alpha_U}{|W|} .
\end{equation}
Moreover, the above inequality is sharp: $\beta$ is the largest coefficient 
for which \eqref{eq:weighted} holds for any random variables satisfying Assumption \ref{assumption}. 
\end{theorem}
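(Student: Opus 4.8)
\textbf{Proof proposal for Theorem \ref{thm:weighted_hg}.}

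The plan is to follow the ``reveal the seeds one by one'' strategy outlined in the introduction. Fix an ordering $Z_1, Z_2, \ldots, Z_m$ of the seeds and, for $0 \le i \le m$, write $\mathcal{F}_i$ for the $\sigma$-algebra generated by $Z_1, \ldots, Z_i$ (so $\mathcal{F}_0$ is trivial). For any subset $U \subseteq V$ and each step $i$ define the \emph{information gain} at step $i$ as
$$
\Delta_i(U) \defeq H\!\left(X_U \mid \mathcal{F}_{i-1}\right) - H\!\left(X_U \mid \mathcal{F}_i\right).
$$
Since $X_v$ is measurable with respect to the $Z_j$ with $v \in W_j$, once all seeds are revealed there is no uncertainty left, so $H(X_U \mid \mathcal{F}_m) = 0$; telescoping gives $H(X_U) = \sum_{i=1}^m \Delta_i(U)$. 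Thus both sides of \eqref{eq:weighted} decompose as sums over $i$ of the quantities $\sum_U \alpha_U \Delta_i(U)$ and $\sum_v H(X_v) = \sum_v \sum_i \Delta_i(\{v\})$, and it suffices to prove, for each fixed $i$,
$$
\sum_{U \subseteq V} \alpha_U\, \Delta_i(U) \;\ge\; \beta \sum_{v \in V} \Delta_i(\{v\}).
$$

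The two key structural facts about $\Delta_i$ are: (a) \emph{monotonicity/superadditivity with respect to incidence} --- if $v \in U$ then $\Delta_i(U) \ge \Delta_i(\{v\})$, because conditioning on $Z_i$ reduces the entropy of $X_U$ by at least as much as it reduces the entropy of its coordinate $X_v$ (this is a standard consequence of the chain rule and the fact that conditioning does not increase entropy; I would prove $\Delta_i(U) \ge \Delta_i(U')$ for $U' \subseteq U$ as the lemma, stated cleanly in the Appendix); and (b) \emph{locality} --- if $v \notin W_i$ then $X_v$ is a function of seeds other than $Z_i$, so $\Delta_i(\{v\}) = 0$, and more generally $\Delta_i(U)$ depends only on $U \cap W_i$. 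Granting (a) and (b), fix $i$ and let $W = W_i$; only coordinates in $W$ contribute on the right, and on the left we may bound each $\Delta_i(U) \ge \Delta_i(U \cap W)$ and then, for nonempty $U \cap W$, bound $\Delta_i(U \cap W)$ from below... but this is where the subtlety lies: we cannot just pick one vertex. Instead I would argue as follows. Consider the nonnegative numbers $\Delta_i(\{v\})$ for $v \in W$ and note that for any nonempty $T \subseteq W$ we have $\Delta_i(T) \ge \max_{v \in T} \Delta_i(\{v\})$. The inequality we want reduces (after dropping the $U$ with $U \cap W = \emptyset$, which contribute nonnegatively on the left and nothing on the right) to
$$
\sum_{\emptyset \ne T \subseteq W} \Big(\sum_{U : U \cap W = T} \alpha_U\Big) \Delta_i(T) \;\ge\; \beta \sum_{v \in W} \Delta_i(\{v\}).
$$

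So the combinatorial heart of the argument is the following purely linear-algebraic claim: given weights $c_T \ge 0$ on nonempty subsets $T$ of a finite set $W$, and given that $\beta \le \big(\sum_{T : T \cap S \ne \emptyset} c_T\big)/|S|$ for every nonempty $S \subseteq W$ (this is exactly the hypothesis defining $\beta$, applied with $W = W_i$, since $\sum_{U : U \cap S \ne \emptyset}\alpha_U = \sum_{T : T \cap S \ne \emptyset} c_T$), then for every vector of nonnegative reals $(x_v)_{v \in W}$ one has $\sum_{\emptyset \ne T \subseteq W} c_T \max_{v \in T} x_v \ge \beta \sum_{v \in W} x_v$. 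I expect this to be the main obstacle, and the clean way to handle it is the ``layer-cake'' / coarea decomposition: write $x_v = \int_0^\infty \ind[x_v > t]\,dt$, set $S_t = \{v : x_v > t\}$, and observe $\max_{v \in T} x_v = \int_0^\infty \ind[T \cap S_t \ne \emptyset]\,dt$. Then $\sum_T c_T \max_{v\in T} x_v = \int_0^\infty \big(\sum_{T : T \cap S_t \ne \emptyset} c_T\big)\,dt \ge \int_0^\infty \beta |S_t|\,dt = \beta \sum_v x_v$, using the hypothesis on each nonempty $S_t$ (and noting $S_t = \emptyset$ contributes zero on both sides). Summing over $i$ completes the inequality. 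For sharpness, I would exhibit, for the index $i_0$ and set $W \subseteq W_{i_0}$ achieving the minimum, a single Bernoulli seed $Z_{i_0}$ sent to exactly $W$ with $X_v = Z_{i_0}$ for $v \in W$ and $X_v$ constant otherwise: then $H(X_U) = \log 2$ whenever $U \cap W \ne \emptyset$ and $0$ otherwise, so the left side equals $\big(\sum_{U : U \cap W \ne \emptyset}\alpha_U\big)\log 2 = \beta |W| \log 2 = \beta \sum_v H(X_v)$, showing no larger coefficient is possible.
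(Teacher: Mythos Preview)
Your proof is correct and follows essentially the same route as the paper: define the per-step information gains $h(i,U)$, telescope, use the monotonicity $h(i,U)\ge \max_{v\in U}h(i,v)$ (the paper's Lemma~\ref{lem:info_gained}), and reduce to a combinatorial claim about nonnegative functions supported on $W_i$, which the paper proves by decomposing $f$ as $\sum_j c_j\ind_{A_j}$ with nested level sets $A_j$---the discrete form of your layer-cake integral. Your sharpness example also matches the paper's; the one imprecise aside (that $\Delta_i(U)$ ``depends only on $U\cap W_i$'') is not actually true in general, but you never use it---you correctly use the inequality $\Delta_i(U)\ge\Delta_i(U\cap W_i)$ from monotonicity instead.
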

\begin{proof}
The idea is to reveal $Z_1,\ldots,Z_m$ one by one and 
look at how much information is gained on average about $X_U$ at each step. 
To be more precise, for any $i$ and $U$ the following conditional entropy can be defined:
$$ H\left( X_U~|~Z_1,\ldots,Z_i \right) .$$
(See the Appendix for basic definitions regarding Shannon entropy 
and for how the above conditional entropy can be defined if $Z_i$ is not discrete.) 
For $i=0$ this is simply $H(X_U)$, while for $i=m$ we get $0$ 
because $Z_1, \ldots, Z_m$ fully determine $X_U$. 
Then the average information gained at step $i$ can be defined as: 
$$ h(i,U) \defeq 
H\left( X_U~|~Z_1,\ldots,Z_{i-1} \right) - H\left( X_U~|~Z_1,\ldots,Z_i \right) .$$ 
In the case of one-element sets $U=\{v\}$ we will simply write $h(i,v)$. 

If we add up all $h(i,U)$ for a given $U$, 
then we get a telescoping sum and obtain that  
$$ H(X_U) = \sum_{i=1}^m h(i,U) .$$ 
We will also need the following observation: 
whenever $U' \subset U$ we have $h(i,U') \leq h(i,U)$ for each $i$. 
This is clear heuristically since for larger $U$
we should gain more information about $X_U$ at each step. 
For a rigorous proof see Lemma \ref{lem:info_gained} in the Appendix. 
In particular, it holds for any given $i$ and $U$ that 
$$ h(i,U) \geq \max_{v \in U} h(i,v) .$$ 
It follows that 
$$ \sum_{U \subseteq V} \alpha_U H(X_U) \geq 
\sum_{i=1}^m \sum_{U \subseteq V} \alpha_U \max_{v \in U} h(i,v) .$$
As for the right-hand side of the inequality \eqref{eq:weighted}, we have 
$$ \beta \sum_{v \in V} H(X_v) = \beta \sum_{i=1}^m \sum_{v \in V} h(i,v) .$$
Therefore it suffices to show that it holds for each $i$ that 
$$ \sum_{U \subseteq V} \alpha_U \max_{v \in U} h(i,v) \geq \beta \sum_{v \in V} h(i,v). $$
Note that, for fixed $i$, $h(i,v)$ is 
a $V \to [0, \infty)$ function that is supported on $W_i$. 
Hence the following claim completes the proof.
\begin{claim*}
For any function $f \colon V \to [0,\infty)$ it holds that 
$$
\frac{ \sum_{U \subseteq V} \alpha_U \max_{v \in U} f(v) }{ \sum_{v \in V} f(v) } 
\geq \min_{ W \subseteq \supp{f} } 
\frac{\sum_{U \subseteq V; U \cap W \neq \emptyset} \alpha_U}{|W|} . 
$$
In other words, the fraction on the left-hand side is minimized by 
an indicator function $\ind_W$. 
\end{claim*}
To prove this claim we take the following unique decomposition of $f$ 
into the sum of indicator functions: 
there exist sets $ \supp{f} = A_1 \supsetneq A_2 \supsetneq \cdots \supsetneq A_k $ 
and positive real numbers $c_1, \ldots, c_k > 0$ such that 
$$ f = \sum_{j=1}^k c_j \ind_{A_j} .$$ 
For such a decomposition one can always interchange taking maximum over a set $U$ with taking sum: 
$$ \max_{v \in U} f(v) = \sum_{j=1}^k c_j \max_{U} \ind_{A_j} = 
\sum_{j=1}^k c_j \ind_{U \cap A_j \neq \emptyset} .$$
Now considering the fraction in the claim, 
the total contribution of the $j$-th term 
to the numerator and denominator 
is $c_j \sum_{U \subseteq V; U \cap A_j \neq \emptyset} \alpha_U$ 
and $c_j |A_j|$, respectively. 
The claim follows by using that 
$\sum a_j / \sum b_j \geq \min_j a_j/b_j$ for any $a_j \geq 0$, $b_j>0$.

There is a slightly different way to finish the proof 
after having defined the values $h(i,U)$ for a given process $X$. 
We include this ending, too, partly because it is a nice argument 
and partly because it easily implies that our inequality is sharp. 
For the sake of simplicity we first present the proof 
under the additional assumption that 
$h(i,v)$ is an integer multiple of $\log 2$ for each $i$ and $v$. 
First we define a new set of random variables: 
let $\tilde{Z}_i$ consist of a finite but sufficiently large number of independent ``bits'' 
(taking values $0$ and $1$ with probability $1/2$ each), and 
let $\tilde{X}_v$ be the collection of the first $h(i,v)/\log 2$ bits 
of $\tilde{Z}_i$ for each $i=1,\ldots,m$. It is clear that 
$$ H( \tilde{X}_v ) = \sum_{i=1}^m h(i,v) = H( X_v ) $$ 
and 
$$ H( \tilde{X}_U ) = \sum_{i=1}^m \max_{v \in U} h(i,v) \leq \sum_{i=1}^m h(i,U) = H( X_U ). $$
It follows that it suffices to prove \eqref{eq:weighted} for this new set of random variables $\tilde{X}_v$. 
Now let us consider an arbitrary bit of $\tilde{Z}_i$. 
This bit is present in the random variables $\tilde{X}_v$, $v \in W$, for some subset $W \subseteq W_i$. 
Therefore its entropy ($\log 2$) will appear 
$\sum_{U \subset V; U \cap W \neq \emptyset} \alpha_U$ times on the left-hand side 
and $\beta |W|$ times on the right-hand side of \eqref{eq:weighted}. 
The former will be clearly larger than or equal to the latter 
provided that we choose $\beta$ as in the theorem.

If the values $h(i,v)/\log 2$ are not necessarily integers, 
then we need to slightly modify the above argument. 
Let $\tilde{Z}_i$ be a sequence of independent discrete random variables 
with prescribed entropies. We can clearly choose these entropies in such a way 
that for any $v \in V$ the value $h(i,v)$ can be obtained 
as the sum of the first $k(i,v)$ of these entropies for some nonnegative integer $k(i,v)$. 
Then $\tilde{X}_v$ can be defined as the collection of the first $k(i,v)$ elements 
of $\tilde{Z}_i$, $i=1, \ldots, m$. The rest of the proof remains essentially the same.

To see sharpness, take a subset $W \subseteq W_i$ with 
$\big( \sum_{U \subseteq V; U \cap W \neq \emptyset} \alpha_U \big) / |W| = \beta$, 
that is, $W$ has ``minimal expansion''. 
Let $X_v$ be the same measurable function of $Z_i$ for all $v \in V$, 
and let $X_v$ be almost surely constant for $v \not\in W$. It is easy to see that \eqref{eq:weighted} 
holds with inequality in this case. Usually there are many $W$ with minimal expansion 
in which case one can consider combinations of these examples for different $W$. 
(Much as in the construction of $\tilde{X}_v$ above.) 
\end{proof}

\subsection{The general problem}

All the inequalities proved in this section, and Shearer's inequality as well,  
fit into the following general problem. Two weighted sums of joint entropies 
of finitely many random variables are given along with sets $W_i$ 
describing which variables can use the same random seed. 
The task is to find the strongest inequality (the one with the best coefficient) between the two sums.
Solving the problem in full generality is probably too much to ask for, 
but there might exist a common generalization of \eqref{eq:weighted} and Shearer's inequality, 
which would already be very interesting.

In fact, it is not crucial for our approach to have individual entropies 
on the right-hand side of the inequality. 
The key observation in our proof was that the information gain is always larger for larger sets, 
that is, $h(i,U') \leq h(i,U)$ provided that $U' \subset U$. 
Note that we only used this fact in the case when $U'$ was a one-element set $\{v\}$. 
If we make full use of this fact, more complicated inequalities can be obtained. 
For example, one could deduce an inequality between the entropies corresponding to the stars 
in a graph and the entropies corresponding to the edges. By a star we mean a vertex $v$ and 
its neighbors $N(v)$. Given a simple graph $G=(V,E)$, a straightforward modification of the 
proof above yields the following star-edge entropy inequality:
\begin{equation}
\sum_{v \in V} H(X_{\{v\} \cup N(v)}) \geq \beta \sum_{e \in E} H(X_e) 
\mbox{, where } \beta = \min_{1 \leq i \leq m; F \subseteq \overline{W_i}} 
\frac{\left| \left\{ v : \exists e \in F \mbox{ s.t. } v \in e \right\} \right|}{|F|} .
\end{equation}
An infinite version of this inequality for factor-of-IID processes on $T_d$ says that 
the entropy of a star is at least $d/2$ times the entropy of an edge 
\cite{invtree,ev}.

\section{Unimodular setting} \label{sec:3}

In this section we investigate what can be said 
for infinitely many random variables. 
It turns out that our approach can be adapted 
provided that $X_v$, $v \in V$, are obtained from the seeds  
in a certain invariant way.
We start with the simplest setup. 

\subsection{Transitive graphs}
Let $G$ be a connected infinite graph that is transitive 
meaning that its automorphism group $\Aut(G)$ acts transitively on its vertex set $V$. 
First we recall the definition of factor-of-IID processes on $G$. 
We start with an IID process over $V$: this is simply a collection of independent 
random variables $Z_v$, $v \in V$, each uniformly distributed on $[0,1]$. 
Then we apply a measurable $\Aut(G)$-equivariant $[0,1]^V \to M^V$ mapping. 
The result is a collection of random variables $X_v$, $v \in V$, each taking values in $M$. 
A process that can be obtained this way is called a factor-of-IID process on $G$. 
Note that in this paper the state space $M$ is usually assumed to be finite. 

Such a process is said to be a \emph{block factor} if it can be obtained in a way that 
each (or, equivalently, any given) $X_v$ is influenced only by finitely many $Z_u$'s. 
In other words, there exists a finite radius $R$ such that $X_v$ is a function of $Z_u$, $u \in B_R(v)$, 
where $B_R(v)$ denotes the ball of radius $R$ around $v$ w.r.t.\ the graph distance in $G$. 
Block factors are also called \emph{finite-radius factors}. 
For a more detailed introduction to factors of IID, see \cite[Section 2]{entr_ineq}.

We will also need to assume \emph{unimodularity} for $G$, 
which is equivalent to the \emph{Mass-Transport Principle}, 
see Section \ref{sec:intro_fiid} for definitions and \cite{aldous_lyons} for details. 
\begin{theorem} \label{thm:block_factor_on_transitive}
Let $G$ be a unimodular transitive graph with vertex set $V$ and with degree $d$. 
Suppose that $X_v$, $v \in V$, is a block factor-of-IID process 
with radius $R$ and with a finite state space. 
For an arbitrary fixed vertex $o \in V$, let $B_R=B_R(o)$ denote the $R$-ball around $o$. 
Then we have 
$$ \frac{1}{2} \sum_{o' \sim o} H( X_o, X_{o'} ) \geq \beta H( X_o ) 
\mbox{, where } \beta = \frac{1}{2} \left( d + \min_{W \subseteq B_R} 
\frac{ | \partial W | }{|W|} \right) .$$
\end{theorem}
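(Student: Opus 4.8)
The plan is to reduce the infinite, transitive statement to the finite, weighted hypergraph inequality of Theorem~\ref{thm:weighted_hg} by localizing the process to a large ball, and to recover the required invariance through a random ordering of the seeds together with the Mass-Transport Principle. First I would set up the relevant finite system: since $X_v$ is a block factor of radius $R$, the value $X_v$ is a measurable function of the seeds $Z_u$, $u \in B_R(v)$. So on any finite piece of $G$ the process looks like the setup of Assumption~\ref{assumption} with the sets $W_u = B_R(u)$ playing the role of the $W_i$ (a seed $Z_u$ is ``seen'' exactly by the vertices $v$ with $u \in B_R(v)$, i.e.\ $v \in B_R(u)$). The hypergraph here is just $G$ itself, with $E$ the edge set; the edge-boundary reformulation of $\beta$ comes from Remark~\ref{rm:expansions}, and because $W_u = B_R(u)$, every $W \subseteq W_i$ in the finite bound is a subset of some $R$-ball, giving exactly the quantity $\min_{W \subseteq B_R} |\partial W|/|W|$ after using transitivity to fix the center at $o$.

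The main issue is that $V$ is infinite, so one cannot literally sum $\sum_{e} H(X_e)$ or $\sum_v H(X_v)$. The resolution, as the proof-method subsection hints, is to run the telescoping-sum argument directly on $G$ but to \emph{reveal the seeds in a uniformly random order} rather than a fixed order, and then take expectations. Concretely, I would pick a finite ``window'' $\Lambda$ (a large ball) on which to do bookkeeping, order the seeds $Z_u$ with $u$ in a slightly larger ball by a uniformly random permutation, and define, for the fixed vertex $o$ and each neighbor $o'$, the information gains $h(o)$, $h(o,o')$ at the step where a given seed is revealed. The key monotonicity $h(i,U') \le h(i,U)$ for $U' \subset U$ (Lemma~\ref{lem:info_gained}) still applies pointwise. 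Averaging over the random order makes the ``amount of information about $X_o$ contributed by seed $Z_u$'' into a diagonally $\Aut(G)$-invariant function $h(o,u)$ of the pair $(o,u)$, and similarly for the edge quantities. This is exactly the structure the Mass-Transport Principle is designed for.

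The heart of the argument is then a pointwise (per-seed) inequality combined with a single application of mass transport. For a fixed seed $Z_u$, let $W = W(u)$ be the set of vertices $v$ whose value $X_v$ actually changes when $Z_u$ is revealed at its random position; this $W$ is a (random) subset of $B_R(u)$. Arguing as in the Claim inside the proof of Theorem~\ref{thm:weighted_hg}, the contribution of $Z_u$ to $\tfrac12 \sum_{o' \sim o} H(X_o,X_{o'})$, summed appropriately over $o$, dominates $\beta$ times its contribution to $\sum_o H(X_o)$, because $|\overline{W}|/|W| \ge \beta$ for every $W \subseteq B_R$. To turn these per-seed, per-vertex statements into the stated inequality at the single vertex $o$, I would define the mass-transport function $h(x,y) = $ (expected information about $X_x$ gained when seed $Z_y$ is revealed), check it is diagonally invariant (this uses the random ordering and $\Aut(G)$-equivariance of the factor map), note $\sum_y h(o,y) = H(X_o)$ by telescoping, and balance it against the analogous edge quantity; unimodularity ($\sum_y h(o,y) = \sum_y h(y,o)$) is what lets one pass between ``seeds seen by $o$'' and ``vertices that see a seed at $o$''.

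The step I expect to be the main obstacle is making the random-ordering construction rigorous for a countable set of seeds: a uniformly random permutation of $\mathbb{Z}$ (or of the seed index set) does not exist, so one must instead use, e.g., i.i.d.\ uniform ``timestamps'' $U_u \in [0,1]$ attached to the seeds and reveal them in increasing order of timestamp, define the conditional entropies $H(X_o \mid \{Z_u : U_u \le t\})$, and differentiate/telescope in $t$. One then has to justify that these conditional entropies are finite, that the relevant limits (as the window $\Lambda$ exhausts $V$) exist and commute with the expectations, and that the resulting $h(x,y)$ is genuinely diagonally invariant and summable. This is a standard but somewhat delicate measure-theoretic bookkeeping; once it is in place, the algebra is exactly the finite Claim plus one line of mass transport. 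Finally, the passage from block factors (this theorem) to general factors-of-IID (Theorem~\ref{thm:fiid}) is a separate approximation step, truncating the factor map to finite radius and letting $R \to \infty$, which I would defer to the proof of Theorem~\ref{thm:fiid} itself.
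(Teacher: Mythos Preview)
Your overall strategy---random ordering of seeds to get a diagonally invariant information-gain function $h(u,v)$, telescoping to recover $H(X_o)$, the per-seed pointwise inequality coming from the Claim in Theorem~\ref{thm:weighted_hg}, and one (actually two) applications of the Mass-Transport Principle---is exactly what the paper does.

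However, the step you flag as the ``main obstacle'' is a phantom. You worry that a uniformly random permutation of the countable seed set does not exist and propose i.i.d.\ timestamps, exhausting windows $\Lambda$, and limit arguments. None of this is needed: the whole point of the \emph{block} factor hypothesis is that $X_v$ depends only on the seeds $Z_u$ with $u \in B_R(v)$, a finite set of fixed size $N = |B_R|$. The paper simply takes, for each $v$, a uniform random bijection $\pi \colon \{1,\ldots,N\} \to B_R(v)$ and defines $h_\pi(u,v)$ as the information gain about $X_v$ at the step where $u$ is revealed, then sets $h(u,v) = \E_\pi h_\pi(u,v)$. For the edge bound one orders the finite set $B_R(o) \cup B_R(o')$ and restricts. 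Diagonal $\Aut(G)$-invariance is then immediate from the equivariance of the factor map together with the fact that the uniform distribution on permutations of a ball is itself invariant. There are no limits, no windows, and no measure-theoretic subtleties beyond the Appendix lemma. Once you see this, the argument is exactly the finite proof plus two lines of mass transport (once for $g(u,o) = \sum_{o' \sim o} \max(h(u,o),h(u,o'))$, once for $h$ itself).
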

Theorem \ref{thm:fiid} stated in the introduction follows immediately 
considering the fact that any factor-of-IID process is the weak limit of block factors.  
A short argument proving this fact for transitive graphs 
can be found in \cite[Proof of Proposition 4.4]{harangi_virag}.

\begin{proof}[Proof of Theorem \ref{thm:block_factor_on_transitive}]
Let $Z_u$, $u \in V$, denote an IID process on the vertex set $V$ of $G$. 
Suppose that the process $X_v$, $v \in V$, is obtained as 
an $\Aut(G)$-equivariant function of the IID process 
in such a way that each $X_v$ is the function of $Z_u$, $u \in B_R(v)$. 

The proof will go along the same lines as in the finite setting. 
The first step is to define the value $h(u,v)$ for any pair of vertices $u,v$. 
Loosely speaking, it will denote the average information gained about $X_v$ 
when the seed $Z_u$ of vertex $u$ is revealed. 
We should specify the order in which we reveal the seeds. 
In the finite setting an arbitrary order could be chosen. 
We have to be more careful here since we will need that 
the function $h \colon V \times V \to [0,\infty)$ is \emph{diagonally $\Aut(G)$-invariant}, 
i.e.\ $h(\gamma u, \gamma v)=h(u,v)$ for all $\gamma \in \Aut(G)$ and $u,v \in V$. 
To this end when defining $h(u,v)$ we will take a random order of the vertices 
in the ball $B_R(v)$: a uniform random bijection $\pi \colon \{1, \ldots, N\} \to B_R(v)$, 
where $N$ denotes the number of vertices in $B_R(v)$. For a fixed bijection $\pi$ we define 
$$ h_\pi(u,v) \defeq 
H\left( X_v~|~Z_{\pi(1)},\ldots,Z_{\pi(i-1)} \right) - 
H\left( X_v~|~Z_{\pi(1)},\ldots,Z_{\pi(i)} \right) 
\mbox{, where } i=\pi^{-1}(u) .$$ 
We get $h(u,v)$ by taking expectation in $\pi$ : 
$$ h(u,v) \defeq \E_\pi h_\pi(u,v) .$$ 
This defines $h(u,v)$ whenever the distance of $u$ and $v$ is at most $R$. 
If the distance is larger than $R$, we set $h(u,v)=0$. 

The function $h$ is clearly diagonally $\Aut(G)$-invariant with the property that 
$$ H(X_o) = \sum_{u \in V} h(u,o) \mbox{ for any } o \in V ,$$
because the same holds for $h_\pi(u,o)$ for any fixed $\pi$. 
It is also easy to see that if $o$ and $o'$ are neighbors ($o' \sim o$ in notation), then 
\begin{equation} \label{eq:edge_bound} 
H(X_o,X_{o'}) \geq \sum_{u \in V} \max\left( h(u,o), h(u,o') \right) .
\end{equation}
Again, it suffices to see that the same holds for 
$h_\pi(u,o)$ and $h_{\pi'}(u,o')$, where $\pi$ and $\pi'$ are the ``restrictions'' 
of any fixed order of the vertices in $B_R(o) \cup B_R(o')$. 

Using \eqref{eq:edge_bound} and switching sums we get that for any fixed vertex $o$ 
\begin{equation} \label{eq:to_be_continued}
\sum_{o' \sim o} H( X_o, X_{o'} ) \geq 
\sum_{o' \sim o} \sum_{u \in V} \max\left( h(u,o), h(u,o') \right) = 
\sum_{u \in V} \underbrace{ \sum_{o' \sim o} \max\left( h(u,o), h(u,o') \right) }_{ g(u,o)\defeq } .
\end{equation}
Now we use the Mass-Transport Principle 
for the function $g(u,o)$ defined above. We obtain that the right-hand side of \eqref{eq:to_be_continued}  
is equal to the following for any fixed $u \in V$: 
\begin{equation} \label{eq:continued}
\sum_{o \in V} g(u,o) = 
\sum_{o \in V} \sum_{o' \sim o} \max\left( h(u,o), h(u,o') \right) = 
2 \sum_{(o,o')\in E(G)} \max\left( h(u,o), h(u,o') \right) .
\end{equation}
Since for fixed $u$ the function $h(u,\cdot)$ is supported on $B_R(u)$, 
we can use the Claim in the proof of the finite setting 
to get the following lower bound for \eqref{eq:continued}:
$$ 2\beta \sum_{o \in V} h(u,o) \mbox{, where } 
\beta = \min_{W \subseteq B_R(u)} 
\frac{\left| \overline{W} \right|}{|W|} = 
\min_{W \subseteq B_R(u)} \frac{d|W| + |\partial W|}{2|W|} .$$ 
Putting these together and applying the Mass-Transport Principle again, 
this time for the function $h$, we get the following for any fixed $o$:
$$ \sum_{o' \sim o} H( X_o, X_{o'} ) \geq 2\beta \sum_{u \in V} h(u,o) = 2\beta H( X_o ) ,$$
and the proof is complete. 
\end{proof}

\subsection{The general theorem}
As in the finite setting one may consider joint entropies of more than two vertices. 
Actually, at this point it is more natural to forget the graph structure completely. 
From this point on, $V$ will denote an arbitrary countable set with a left $\Ga$-action for some group $\Ga$. 
We will assume the action to be quasi-transitive (i.e.~having finitely many orbits) 
and to satisfy the following unimodularity-type condition. 
\begin{definition}
A function $f \colon V \times V \to [0,\infty]$ is said to be diagonally $\Ga$-invariant if 
$$ f(\ga u, \ga v) = f(u,v) \quad \forall u,v \in V \ \forall \ga \in \Ga .$$
We say that the \emph{Mass-Transport Principle} holds for a $\Ga$-action 
with respect to a measure $\mu$ on the orbit classes  
if for all diagonally $\Ga$-invariant function $f$ it holds that 
$$ \E_{\mu(o)} \sum_{v \in V} f(o,v) = \E_{\mu(o)} \sum_{v \in V} f(v,o) .$$ 
This is actually equivalent to the following statement on the orbit sizes of the stabilizers:  
$$ \frac{ |\Stab(o) o'| }{ |\Stab(o') o| } = \frac{ \mu(o') }{ \mu(o) } 
\mbox{ for any pair } o,o' .$$
\end{definition}
Next we introduce the weight function $\alpha \colon \PS(V) \to [0,\infty)$ 
that assigns nonnegative real numbers to subsets of $V$. We require $\alpha$ 
to satisfy the following properties: 
\begin{itemize}
\item $\alpha$ is $\Ga$-invariant: $\alpha_U = \alpha_{\gamma U}$ 
for any $U \subset V$ and $\ga \in \Ga$;
\item $\alpha$ is supported on finite sets: 
$\alpha_U = 0$ if $U$ is not finite;
\item $\alpha$ is ``locally finitely supported'': 
for any $v \in V$ there are finitely many $U$ 
such that $v \in U$ and $\alpha_U = 0$.
\end{itemize}
Now we are in a position to state the most general form of our theorem in the infinite setting. 
We started with the transitive graph version (Theorem \ref{thm:block_factor_on_transitive}) 
because both the statement and the proof are much easier to digest in that special case. 
The proof itself is a straightforward modification of that of Theorem \ref{thm:block_factor_on_transitive}, 
and we leave the details to the reader.
\begin{theorem} \label{thm:general} 
Let $V$, $\Ga$, $\mu$, $\alpha$ be as above, and let 
$Z_u$, $u \in V$, be an IID process on $G$. 
Suppose that $X_v$, $v \in V$, is a $\Ga$-factor of $Z$ with finite state space 
such that for any vertex $u$ the value of $Z_{u}$ influences only 
the labels $X_v$, $v \in W_u$, for some finite set $W_u \subset V$. 
Then 
$$ \E_{\mu(o)} \sum_{U : o \in U} \frac{\alpha_U}{|U|} H( X_U ) \geq \beta \E_{\mu(o)} H( X_o ) 
\mbox{, where } \beta = \min_{u \in V; W \subseteq W_u} 
\frac{ \sum_{U \subseteq V; U \cap W \neq \emptyset} \alpha_U }{|W|} .$$
\end{theorem}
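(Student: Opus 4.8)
The plan is to mimic the proof of Theorem \ref{thm:block_factor_on_transitive} almost verbatim, keeping track of the weight function $\alpha$ and the measure $\mu$ on orbit classes throughout. First I would fix a reference vertex $o$ and, for each vertex $v$ and each $u \in W$ where $W$ is a finite set with $v \in X_{W_u}$'s seed dependence (more precisely, each $u$ with $v \in W_u$), define $h(u,v)$ by revealing the seeds $Z_u$ in a uniformly random order \emph{restricted to the finite set of seeds that $X_v$ depends on}, forming the telescoping difference of conditional entropies $H(X_v \mid Z_{\pi(1)},\dots,Z_{\pi(i-1)}) - H(X_v \mid Z_{\pi(1)},\dots,Z_{\pi(i)})$ with $i = \pi^{-1}(u)$, and then taking expectation over $\pi$. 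Setting $h(u,v)=0$ when $v \notin W_u$, the resulting $h \colon V\times V \to [0,\infty)$ is diagonally $\Ga$-invariant (because the construction is equivariant), satisfies the telescoping identity $\sum_{u \in V} h(u,o) = H(X_o)$, and for \emph{any} finite $U$ with a fixed joint order of the relevant seeds satisfies $H(X_U) \geq \sum_{u\in V}\max_{v\in U} h(u,v)$, by Lemma \ref{lem:info_gained} applied pointwise in the order (the same argument as for \eqref{eq:edge_bound}).

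Next I would assemble the left-hand side. Multiplying the bound $H(X_U)\geq \sum_u \max_{v\in U} h(u,v)$ by $\alpha_U/|U|$, summing over all finite $U$ containing $o$, and taking $\E_{\mu(o)}$, I get
\[
\E_{\mu(o)}\sum_{U:o\in U}\frac{\alpha_U}{|U|}H(X_U)\ \geq\ \E_{\mu(o)}\sum_{U:o\in U}\frac{\alpha_U}{|U|}\sum_{u\in V}\max_{v\in U}h(u,v).
\]
The quantity $G(u,o) := \sum_{U : o\in U}\frac{\alpha_U}{|U|}\max_{v\in U}h(u,v)$ is diagonally $\Ga$-invariant in $(u,o)$, so the Mass-Transport Principle lets me swap the roles of the two variables: $\E_{\mu(o)}\sum_{u}G(u,o) = \E_{\mu(u)}\sum_{o}G(u,o)$. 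Now I interchange the $o$-sum with the $U$-sum: since each finite $U$ is counted once for every $o\in U$ with weight $1/|U|$, the factor $1/|U|$ cancels and $\sum_{o\in V}G(u,o) = \sum_{U\subseteq V}\alpha_U \max_{v\in U}h(u,v)$. So, fixing $u$, I have reduced matters to bounding $\sum_{U\subseteq V}\alpha_U\max_{v\in U}h(u,v)$ from below by $\beta\sum_{v\in V}h(u,v)$.

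For that last step I would invoke the Claim from the proof of Theorem \ref{thm:weighted_hg} verbatim: $h(u,\cdot)$ is a nonnegative function supported on the finite set $W_u$, so the Claim gives
\[
\sum_{U\subseteq V}\alpha_U\max_{v\in U}h(u,v)\ \geq\ \Big(\min_{W\subseteq W_u}\frac{\sum_{U\subseteq V;\,U\cap W\neq\emptyset}\alpha_U}{|W|}\Big)\sum_{v\in V}h(u,v)\ \geq\ \beta\sum_{v\in V}h(u,v),
\]
where $\beta$ is the stated minimum over all $u$ and all $W\subseteq W_u$. Finally I apply the Mass-Transport Principle once more, this time to the diagonally $\Ga$-invariant function $h$ itself: $\E_{\mu(u)}\sum_{v}h(u,v) = \E_{\mu(o)}\sum_{u}h(u,o) = \E_{\mu(o)}H(X_o)$, which closes the chain of inequalities.

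I expect the main obstacle to be purely bookkeeping rather than conceptual: verifying that all the sums involved are absolutely convergent so that the interchanges of summation and the two applications of the Mass-Transport Principle are legitimate. The local-finiteness of $\alpha$ and the finiteness of each $W_u$ (equivalently, finite radius of the block factor in the transitive case) are exactly what guarantee this, but one must check, e.g., that $G(u,o)$ is supported on finitely many $u$ for fixed $o$ and vice versa, and that $\E_{\mu(o)}H(X_o)<\infty$ since the state space is finite and there are finitely many orbits. A secondary point requiring a sentence of care is the definition of $h(u,v)$ via a random order: one must order a \emph{finite} set of seeds (those that $X_v$ actually depends on) so that $\pi$ is well-defined, and then check that the joint-order refinement argument behind $H(X_U)\geq\sum_u\max_{v\in U}h(u,v)$ still goes through when different vertices $v\in U$ have different seed-dependence sets — but this is handled exactly as in \eqref{eq:edge_bound}, by fixing one order on the union of the relevant seed sets and restricting.
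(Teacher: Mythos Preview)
Your proposal is correct and is precisely the ``straightforward modification'' of the proof of Theorem~\ref{thm:block_factor_on_transitive} that the paper explicitly leaves to the reader. The two key adaptations you identify---replacing the sum over neighbors by the weighted sum $\sum_{U\ni o}\tfrac{\alpha_U}{|U|}(\cdots)$ so that after the first Mass-Transport the factor $1/|U|$ cancels against the $|U|$ choices of $o\in U$, and invoking the Claim from Theorem~\ref{thm:weighted_hg} for the finitely supported function $h(u,\cdot)$ on $W_u$---are exactly what is needed, and your remarks about the bookkeeping (finiteness of the seed set influencing a given $X_v$, local finiteness of $\alpha$, diagonal $\Ga$-invariance of $G(u,o)$) cover the points that require care.
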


\subsection{Corollaries}
We mention a couple of consequences of Theorem \ref{thm:general} in this section. 

The first special case concerns factors of IID over Cayley graphs of 
finitely-generated groups. 
\begin{theorem} \label{thm:cayley}
Let $\Ga$ be an infinite group with a finite symmetric generating set $S$. 
We denote the unit element of $\Ga$ by $e$. 
Suppose that $X_\ga$, $\ga \in \Ga$, is a $\Ga$-factor-of-IID process over 
(the Cayley diagram of) $\Ga$, where $X_\ga$ takes values from a finite set. 
Given arbitrary nonnegative real numbers 
$\alpha_\sigma \geq 0$, $\sigma \in S$, with $\alpha_\sigma  = \alpha_{\sigma^{-1}}$ 
we have the following inequality:
$$ \sum_{\sigma \in S} \alpha_\sigma H( X_e, X_\sigma ) \geq \beta H( X_e ) 
\mbox{, where } \beta = 
\inf_{W \subset \Ga, |W| < \infty} 
\frac{ \sum_{ \sigma \in S} \alpha_\sigma |W \cup \sigma W| }{|W|}.$$
\end{theorem}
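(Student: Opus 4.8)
The plan is to deduce Theorem \ref{thm:cayley} as a special case of the general unimodular result, Theorem \ref{thm:general}, by choosing the right data $(V, \Ga, \mu, \alpha)$. I would take $V = \Ga$ with $\Ga$ acting on itself by \emph{left} multiplication; this action is (simply) transitive, so there is a single orbit class and I can take $\mu$ to be the point mass on $o = e$, which trivializes the expectations $\E_{\mu(o)}$ in Theorem \ref{thm:general}. The Mass-Transport Principle holds here because $\Stab(o) = \{e\}$ is trivial for every $o$, so both orbit sizes in the displayed criterion equal $1$; equivalently, unimodularity of a discrete group with the counting measure is automatic. That disposes of all the ``unimodular/quasi-transitive'' hypotheses of Theorem \ref{thm:general} for free.

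Next I would set up the weight function $\alpha$ on $\PS(\Ga)$. The edge $\{e,\sigma\}$ of the Cayley graph, under the left action, has orbit $\{\{\ga, \ga\sigma\} : \ga \in \Ga\}$. Since $S$ is symmetric, the edges $\{e,\sigma\}$ and $\{e,\sigma^{-1}\} = \{\sigma^{-1}, e\}$ lie in the same orbit (translate the latter by $\sigma$), so the natural $\Ga$-invariant choice is to put weight on $2$-element subsets only: for a pair $\{a,b\}$ with $a^{-1}b = \sigma \in S$, assign $\alpha_{\{a,b\}} = \alpha_\sigma = \alpha_{\sigma^{-1}}$ (well-defined precisely because $\alpha_\sigma = \alpha_{\sigma^{-1}}$), and $\alpha_U = 0$ for all other $U$. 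The three required properties of $\alpha$ hold: it is $\Ga$-invariant by construction, supported on the finite ($2$-element) sets, and locally finitely supported since a fixed vertex lies in only $|S|$ weighted pairs. The IID seeds $Z_u$, $u \in \Ga$, and the block/finite-reach factor structure $X_v$ a function of $Z_u$, $u \in W_u$ finite, carry over verbatim — indeed any $\Ga$-factor-of-IID is a weak limit of block factors, so, exactly as in the deduction of Theorem \ref{thm:fiid} from Theorem \ref{thm:block_factor_on_transitive}, it suffices to prove the inequality for block factors, where Theorem \ref{thm:general} applies directly.

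Then I would just unwind the conclusion of Theorem \ref{thm:general} with this data. On the left-hand side, $\sum_{U : o \in U} \frac{\alpha_U}{|U|} H(X_U)$ with $o = e$: the only contributing $U$ are the pairs $\{e,\sigma\}$, $\sigma \in S$, each with $|U| = 2$, giving $\sum_{\sigma \in S} \frac{\alpha_\sigma}{2} H(X_e, X_\sigma)$. But each \emph{unordered} pair $\{e,\sigma\}$ is counted once for $\sigma$ and once for $\sigma^{-1}$ as $\sigma$ ranges over $S$ — wait, more carefully: $\{e,\sigma\}$ and $\{e,\sigma^{-1}\}$ are distinct subsets unless $\sigma = \sigma^{-1}$, and $H(X_e, X_{\sigma^{-1}})$ is in general not $H(X_e, X_\sigma)$, so the sum is genuinely $\sum_{\sigma \in S}\frac{\alpha_\sigma}{2}H(X_e,X_\sigma)$; the factor $\frac12$ is absorbed into $\beta$. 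Hmm — comparing with the target $\sum_{\sigma \in S}\alpha_\sigma H(X_e, X_\sigma) \ge \beta H(X_e)$, I need to reconcile the factor of $2$; I expect the resolution is that in Theorem \ref{thm:general}'s $\beta = \min_{u,W} \big(\sum_{U : U\cap W \ne \emptyset}\alpha_U\big)/|W|$, a pair $\{a,b\}$ meeting $W$ is counted once, whereas summing $|W \cup \sigma W|$ over $\sigma$ counts ordered incidences, and the bookkeeping $\sum_{\sigma \in S}\alpha_\sigma|W \cup \sigma W| = \sum_{\sigma}\alpha_\sigma(2|W| - |W \cap \sigma W|)$ versus $\sum_{\{a,b\}\cap W \ne\emptyset}\alpha_{\{a,b\}}$ will produce exactly the matching factor. \textbf{The main obstacle} is precisely this combinatorial identification of the two forms of $\beta$: showing that $\min_{W} \big(\sum_{U: U\cap W\ne\emptyset}\alpha_U\big)/|W|$ computed for the pair-weights above equals $\inf_{W}\big(\sum_\sigma \alpha_\sigma|W\cup\sigma W|\big)/|W|$ (possibly up to the factor of $2$ that also rescales the left side), and checking that the min over finite $W$ in a now-infinite group may legitimately be written as an inf. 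The rest — verifying hypotheses, unwinding definitions — is routine.
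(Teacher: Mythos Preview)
Your proposal is correct and follows essentially the same route as the paper: set $V=\Ga$ with the left action of $\Ga$ on itself (unimodular, transitive), put weight $\alpha_\sigma$ on the two-element sets $\{\ga,\ga\sigma\}$, apply Theorem \ref{thm:general} for block factors, and pass to general factors of IID by weak limits. Your anticipated ``obstacle''---matching the two forms of $\beta$ up to the factor $2$---is exactly right: each unordered edge $\{a,a\sigma\}$ corresponds to two ordered pairs, so $\sum_{U\cap W\neq\emptyset}\alpha_U=\tfrac12\sum_{\sigma}\alpha_\sigma\,|W\cup W\sigma^{-1}|=\tfrac12\sum_{\sigma}\alpha_\sigma\,|W\cup \sigma W|$ (using $|W\cup W\sigma^{-1}|=|W\sigma\cup W|$ and the symmetry $\alpha_\sigma=\alpha_{\sigma^{-1}}$), and this $\tfrac12$ cancels the $\tfrac12$ coming from $|U|=2$ on the entropy side.
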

\begin{remark}
When $\Ga$ is a free group and the weights are all equal, 
we get back an inequality that was known by Bowen \cite{bowen} 
as the fact the so-called \emph{$f$-invariant} is nonnegative 
for factors of the Bernoulli shift. See also \cite[Theorem 2.3]{mut_inf}, 
where this inequality is explicitly stated. 
\end{remark}
To see that Theorem \ref{thm:cayley} is indeed a special case of Theorem \ref{thm:general}, 
we set $V = \Ga$ and consider the natural left action of $\Ga$ on itself. 
It is a well-known fact that this is a unimodular transitive action. 
Using $\alpha_\sigma$ we define our weight function $\alpha \colon \PS(V) \to [0,\infty)$ as follows: 
for two-element sets of the form $U = \{ \ga, \ga \sigma\}$, $\ga \in \Ga, \sigma \in S$, 
we assign $\alpha_U \defeq \alpha_\sigma$, and we set $\alpha_U = 0$ for any other set $U$. 
This yields Theorem \ref{thm:cayley} for block factors. 
By taking weak limit, we get the inequality for any factor-of-IID process.

The other special case we briefly mention concerns factor-of-IID processes on $T_d$. 
The edge-vertex inequality \eqref{eq:edge-vertex} compares the entropy of a vertex 
to the joint entropy of two neighboring vertices. 
One could compare the vertex entropy to the entropy $H(X_{U_0})$ 
for any finite set $U_0 \subset V(T_d)$, 
and ask for the largest coefficient $c = c(U_0)$ such that 
$$ H(X_{U_0}) \geq c H(X_v) $$
holds for any factor-of-IID process $X$. 
Setting $\Gamma = \Aut(T_d)$, $V=V(T_d)$, 
and $\alpha(U) = 1$ whenever $U$ can be mapped to $U_0$ 
via an automorphism of $T_d$, we can apply Theorem \ref{thm:general} 
to get the above inequality with some coefficient $c$. 
It is actually not straightforward to see that the obtained coefficient is optimal. 
We omit the details here but it can be shown that 
the coefficient Theorem \ref{thm:general} provides is actually equal to 
$$ c= c(U_0) = \inf_{R} \frac{ |B_R(U_0)| }{ |B_R(v)| } 
\mbox{, where $B_R(U_0)$ is the $R$-neighborhood of $U_0$.} $$
It is easy to construct factor-of-IID processes for which 
the ratio $H(X_{U_0}) / H(X_v)$ tends to the above coefficient, 
see the example described in \cite[Section 5.1]{mut_inf}, 
showing that this coefficient is indeed the best possible.

\section{Beyond entropy: possible generalizations} \label{sec:beyond}

The goal of this section is to explore what properties of entropy were crucial in our arguments 
to see if there are any other quantities (assigned to random variables) that our proofs would work for. 

\subsection{Uncertainty functions}
Let us consider the space $\IM$ of probability measures on some measurable space $(A, \IA)$. 
This space is clearly closed under convex combination. Therefore we can talk about the convexity 
and concavity of $\IM \to \IR$ functions. 
\begin{definition}
We say that a function $\delta \colon \IM \to \IR$ is an \emph{uncertainty function} 
if $\delta$ is concave and $\delta(\nu)=0$ holds for any measure $\nu$ concentrated 
on one point (i.e.~unit mass).

The concavity of $\delta$ essentially means that the $\delta$-value of a mixture distribution must be 
greater than or equal to the expectation of the $\delta$-values. 
\end{definition}

To keep things simple, here we restrict our attention to the finite simple graph case: 
let $G$ be a finite graph with vertex set $V$ and edge set $E$. 
Suppose that $X_v$, $v \in V$, are $(A,\IA)$-valued random variables 
satisfying Assumption \ref{assumption}. This time, however, $X_v$ does not need to be discrete. 
Let $\delta$ be an uncertainty function for probability measures over $A$ as described above. 
Now we can talk about the uncertainty corresponding to a vertex: 
the $\delta$-value of (the distribution of) the random variable $X_v$. 
To be able to talk about the uncertainty corresponding to an edge $e=(u,v)$ as well, 
we need to choose an uncertainty function $\Delta$ for probability measures over $A \times A$. 
Of course, $\delta$ and $\Delta$ should be compatible in some sense. 
It turns out that the only assumption we need is the following.
\begin{assumption} \label{assumption2}
Let $\delta$ and $\Delta$ be uncertainty functions over $A$ and $A \times A$, respectively. 
Furthermore, for a measure $\mu$ on $A \times A$, 
let $\pi_1(\mu)$ and $\pi_2(\mu)$ denote the marginals of $\mu$. 
We assume that 
\begin{equation} \label{eq:delta}
\Delta - \delta \circ \pi_1 \mbox{ and } \Delta - \delta \circ \pi_2 
\mbox{ are both concave.} 
\end{equation}
\end{assumption}

Under this assumption, essentially the same proof works as in Theorem \ref{thm:weighted_hg}. 
For the sake of simplicity, let us assume that the random seeds $Z_1, \ldots, Z_m$ are discrete. 
Then for any event $Z_1=z_1, \ldots, Z_i=z_i$ with positive probability, 
we can consider the distribution of $X_v$ conditioned on this event and 
take the $\delta$-value of this conditional distribution. 
By taking expectation in $z_1, \ldots, z_i$ we get the expected uncertainty of the vertex $v$ 
after revealing the first $i$ seeds. We denote this by $\delta(i,v)$. 
Since $\delta$ is assumed to be concave, it is easy to see that 
$\delta(i,v)$ is monotone decreasing in $i$, and hence for any fixed $i$, 
$h(i,v) \defeq \delta(i-1,v)-\delta(i,v)$ is a nonnegative $V \to [0,\infty)$ function 
(supported on $W_i$).

Similarly, for an edge $e=(u,v)$, we define $\Delta(i,e)$ by the expectation 
of the $\Delta$-value of the joint distribution of $(X_u,X_v)$ conditioned on the first $i$ seeds. 
Then we define $h(i,e)$ as $\Delta(i-1,e) - \Delta(i,e)$. The crucial fact our proof relied on was that 
$$ h(i,e) \geq \max\left( h(i,u); h(i,v) \right) ,$$
which easily follows from \eqref{eq:delta}. 
The rest of the proof remains the same yielding the following result. 
\begin{theorem} \label{thm:uncertainty}
Suppose that $G=(V,E)$ is a finite graph, and 
$X_v$, $v \in V$, are $(A,\IA)$-valued random variables  
satisfying Assumption \ref{assumption}.  Furthermore, let $\delta$ and $\Delta$ be 
uncertainty functions as in Assumption \ref{assumption2}. Then 
\begin{equation*}
\sum_{e \in E} \Delta(X_e) \geq \beta \sum_{v \in V} \delta(X_v) 
\mbox{, where } \beta = \min_{1 \leq i \leq m; W \subseteq W_i} 
\frac{\left| \overline{W} \right|}{|W|} .
\end{equation*}
Recall that $\overline{W}$ is defined as $\{ e \in E : e \cap W \neq \emptyset \}$.
\end{theorem}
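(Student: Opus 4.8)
The plan is to mimic the proof of Theorem~\ref{thm:weighted_hg} line for line, replacing joint entropy with the uncertainty functions $\delta$ and $\Delta$ and replacing the entropy facts by their uncertainty-function analogues. First I would reveal the independent seeds $Z_1, \ldots, Z_m$ one at a time and, for each $i$ and each vertex $v$, define $\delta(i,v)$ to be the expectation over $(z_1,\ldots,z_i)$ of the $\delta$-value of the law of $X_v$ conditioned on $\{Z_1=z_1,\ldots,Z_i=z_i\}$; likewise define $\Delta(i,e)$ for an edge $e=(u,v)$ using the conditional joint law of $(X_u,X_v)$ and the function $\Delta$. (For non-discrete seeds one uses the appropriate conditional-expectation version, exactly as with conditional entropy in the Appendix.) Since $\delta$ is concave, the map $i \mapsto \delta(i,v)$ is monotone decreasing, because conditioning on more information splits a mixture into its components and concavity says the mixture has uncertainty at least the average of the components; the same holds for $\Delta(i,e)$. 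Hence $h(i,v) \defeq \delta(i-1,v) - \delta(i,v) \geq 0$ and $h(i,e) \defeq \Delta(i-1,e) - \Delta(i,e) \geq 0$, and $h(i,v)$ is supported on $W_i$ for each fixed $i$ (revealing $Z_i$ changes nothing about $X_v$ when $v \notin W_i$).

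Next I would record the two structural facts. The telescoping identity gives $\delta(X_v) = \delta(0,v) = \sum_{i=1}^m h(i,v)$ and $\Delta(X_e) = \sum_{i=1}^m h(i,e)$, since $\delta(m,v) = 0$ (the seeds determine $X_v$, so its conditional law is a.s.\ a unit mass, on which $\delta$ vanishes) and similarly $\Delta(m,e)=0$. The key monotonicity is
$$ h(i,e) \geq \max\bigl( h(i,u),\, h(i,v) \bigr) \quad \text{for } e = (u,v), $$
which is precisely where Assumption~\ref{assumption2} enters: concavity of $\Delta - \delta\circ\pi_1$ says that the quantity $\Delta(i-1,e) - \delta(i-1,u)$ is monotone decreasing in $i$ (same mixture-splitting argument applied to this new concave functional), so $\Delta(i-1,e) - \Delta(i,e) \geq \delta(i-1,u) - \delta(i,u)$, i.e.\ $h(i,e) \geq h(i,u)$; concavity of $\Delta - \delta\circ\pi_2$ gives $h(i,e) \geq h(i,v)$.

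With these in hand the proof closes exactly as before. Summing the edge identity and using the monotonicity fact,
$$ \sum_{e \in E} \Delta(X_e) = \sum_{i=1}^m \sum_{e \in E} h(i,e) \geq \sum_{i=1}^m \sum_{e \in E} \max\bigl(h(i,u),h(i,v)\bigr), $$
while the right-hand side of the claimed inequality is $\beta \sum_{v \in V} \delta(X_v) = \beta \sum_{i=1}^m \sum_{v \in V} h(i,v)$. So it suffices to prove, for each fixed $i$, that
$$ \sum_{e \in E} \max\bigl(h(i,u),h(i,v)\bigr) \geq \beta \sum_{v \in V} h(i,v), $$
and since $f = h(i,\cdot)$ is a nonnegative function supported on $W_i$, this is exactly the Claim from the proof of Theorem~\ref{thm:weighted_hg} applied with weights $\alpha_U = 1$ for $U \in E$ and $\alpha_U = 0$ otherwise: the left side is $\sum_{U \subseteq V} \alpha_U \max_{v \in U} f(v)$, which the Claim bounds below by $\min_{W \subseteq \supp f} \frac{|\overline{W}|}{|W|} \cdot \sum_{v} f(v) \geq \beta \sum_v f(v)$, the final inequality because $\supp f \subseteq W_i$ so the minimum over $W \subseteq W_i$ is at least the minimum over all $i$ and all $W \subseteq W_i$, which is $\beta$.

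\textbf{Main obstacle.} The only genuine point requiring care is establishing the monotonicity facts $i \mapsto \delta(i,v)$ decreasing and, more importantly, $h(i,e) \geq \max(h(i,u),h(i,v))$, from concavity: one must verify that conditioning on the additional seed $Z_i$ really does express the law conditioned on $(Z_1,\ldots,Z_{i-1})$ as a mixture (indexed by the value of $Z_i$) of the laws conditioned on $(Z_1,\ldots,Z_i)$, with mixing weights the conditional distribution of $Z_i$, and then invoke concavity of the relevant functional ($\delta$, or $\Delta - \delta\circ\pi_1$, etc.) together with a further averaging over $(z_1,\ldots,z_{i-1})$. This is a routine but not entirely trivial measure-theoretic bookkeeping step, and in the non-discrete case it is cleanest to phrase everything through conditional expectations as in the Appendix; everything downstream is then a verbatim transcription of the earlier argument.
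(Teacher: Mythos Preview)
Your proposal is correct and follows essentially the same approach as the paper: define $\delta(i,v)$ and $\Delta(i,e)$ as expected uncertainties after revealing the first $i$ seeds, use concavity of $\delta$ for nonnegativity of the increments, use concavity of $\Delta - \delta\circ\pi_j$ (Assumption~\ref{assumption2}) to get $h(i,e) \geq \max(h(i,u),h(i,v))$, telescope, and finish with the Claim from the proof of Theorem~\ref{thm:weighted_hg}. Your identification of the one point requiring care---the mixture-splitting justification for the monotonicity step---is also exactly where the paper places the emphasis.
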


As for the infinite setting, Theorem \ref{thm:fiid} can be generalized similarly.

When $A$ is finite, we can use entropy for both $\delta$ and $\Delta$, 
and we get back our original inequality. Note that conditional entropy can be 
obtained as the expectation of entropies of conditional distributions; see the Appendix. 
Therefore $\delta(i,v)$ and $\Delta(i,e)$ in this case 
are the same conditional entropies that came up in the original proof.

\subsection{Correlation bound}
Another natural way to measure uncertainty of a distribution is variance. 
Next we study what Theorem \ref{thm:uncertainty} gives 
if we set $\delta(\nu)$ to be the variance of $\nu$. 

Suppose that $A=\IR$ equipped with the Borel $\sigma$-algebra. 
We will work under the assumption that the random variables 
$X_v$, $v \in V$, have finite variance. 
For a real-valued random variable $\nu$ of finite variance, 
$\delta(\nu)$ simply denotes the variance of $\nu$: 
$$ \delta(\nu) = \int_{\IR} x^2 \, \mathrm{d}\nu(x) - 
\left( \int_{\IR} x \, \mathrm{d}\nu(x) \right)^2 . $$
It is easy to see that this $\delta$ is indeed concave. 
(Note that the first term is linear in $\nu$ 
and hence does not influence convexity/concavity.)
 
A natural candidate for $\Delta$, the uncertainty function over $\IR \times \IR$, 
would be the covariance of the two coordinates. 
Some constant multiples of the variances of the marginals 
should be added to the covariance so that \eqref{eq:delta} holds true. 
More precisely, let $\mu$ be a probability measure on $\IR \times \IR$ 
with marginals $\mu_1 = \pi_1(\mu)$ and $\mu_2 = \pi_2(\mu)$. 
We want to find an uncertainty function of the following form:
$$ \Delta(\mu) = a \big( \delta(\mu_1) + \delta(\mu_2) \big) + 
b  \left( \int_{\IR \times \IR} xy \, \mathrm{d}\mu(x,y) - 
\left( \int_{\IR} x \, \mathrm{d}\mu_1(x) \right) \left( \int_{\IR} y \, \mathrm{d}\mu_2(y) \right) \right) .$$
It is easy to see that \eqref{eq:delta} is satisfied if and only if 
$$ a>1 \mbox{ and } |b| \leq 2\sqrt{a(a-1)} .$$
Setting $b = \pm 2\sqrt{a(a-1)}$, we get the following from Theorem \ref{thm:uncertainty} 
for $d$-regular graphs:
$$ \pm 2\sqrt{a(a-1)} \sum_{(u,v)\in E} \cov(X_u,X_v)
\leq d(a-\beta/d) \sum_{v \in V} \var(X_v) .$$ 
Easy calculations show that the best inequality is obtained 
by setting $a = \frac{\beta/d}{2\beta/d - 1}$, and we get the following result.
\begin{theorem}
Let $G$ be a $d$-regular graph with vertex set $V$ and edge set $E$. 
The random variables $X_v$, $v \in V$, and the coefficient $\beta$ are as in Theorem \ref{thm:regular}.
If $\var(X_v) < \infty$ for each $v \in V$, then 
$$ \left| \sum_{(u,v)\in E} \cov(X_u,X_v) \right| \leq \sqrt{\beta(d-\beta)} \sum_{v \in V} \var(X_v) .$$
\end{theorem}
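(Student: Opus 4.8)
The plan is to apply Theorem~\ref{thm:uncertainty} with a one-parameter family of compatible uncertainty functions built from variance and covariance, and then optimize over the parameter. Concretely, I would take $\delta(\nu) = \var(\nu)$ on $A = \IR$, which is concave since the second-moment term is linear in $\nu$. For the edge function I would take
\[
\Delta_a(\mu) = a\big(\var(\mu_1) + \var(\mu_2)\big) \pm 2\sqrt{a(a-1)}\,\cov(\mu),
\]
where $\mu_1,\mu_2$ are the marginals of $\mu$ and $\cov(\mu) = \int xy\,\mathrm{d}\mu - (\int x\,\mathrm{d}\mu_1)(\int y\,\mathrm{d}\mu_2)$. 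The first routine check is that $\Delta_a$ is an uncertainty function and that Assumption~\ref{assumption2} holds: one writes $\Delta_a - \delta\circ\pi_1$ and $\Delta_a - \delta\circ\pi_2$ as quadratic forms in the first/second moments (the second-moment parts being linear in $\mu$ hence harmless) and checks that the resulting $2\times2$ form in the first moments is negative semidefinite; this gives exactly the constraint $a > 1$ and $|b| \le 2\sqrt{a(a-1)}$ already recorded in the text, so at the endpoint $b = \pm 2\sqrt{a(a-1)}$ concavity still holds.

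Next I would feed $\delta$ and $\Delta_a$ into Theorem~\ref{thm:uncertainty}. On a $d$-regular graph the coefficient $\beta$ is unchanged (it is purely combinatorial, depending only on $G$ and the sets $W_i$), and summing $\Delta_a$ over edges while using $d$-regularity to collect the marginal-variance terms gives
\[
a\,d\sum_{v\in V}\var(X_v) \;\pm\; 2\sqrt{a(a-1)}\sum_{(u,v)\in E}\cov(X_u,X_v) \;\ge\; \beta\sum_{v\in V}\var(X_v),
\]
i.e.\ $\mp\,2\sqrt{a(a-1)}\sum_{(u,v)\in E}\cov(X_u,X_v) \le (ad-\beta)\sum_v\var(X_v)$. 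Choosing the sign appropriately this bounds $\big|\sum_{(u,v)\in E}\cov(X_u,X_v)\big|$ by $\tfrac{ad-\beta}{2\sqrt{a(a-1)}}\sum_v\var(X_v)$ for every admissible $a > 1$ (one should note $\beta \ge d/2 \ge 1$ so $ad - \beta > 0$ and the bound is genuinely a constraint).

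The final step is the one-variable optimization: minimize $g(a) = \dfrac{ad-\beta}{2\sqrt{a(a-1)}}$ over $a\in(1,\infty)$. Differentiating and setting $g'(a)=0$ leads, after clearing the square root, to the critical point $a = \dfrac{\beta/d}{2\beta/d - 1}$ already quoted in the excerpt (one checks $2\beta/d - 1 > 0$ using $\beta \ge d/2$, and that this $a$ indeed exceeds $1$). Substituting back and simplifying the algebra collapses $g$ at this point to $\sqrt{\beta(d-\beta)}/d \cdot$ (constant) $= \sqrt{\beta(d-\beta)}\,$ — more precisely one gets $g(a) = \sqrt{\beta(d-\beta)}$ after the dust settles — which yields the claimed inequality
\[
\left|\sum_{(u,v)\in E}\cov(X_u,X_v)\right| \le \sqrt{\beta(d-\beta)}\sum_{v\in V}\var(X_v).
\]

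The main obstacle, such as it is, is purely bookkeeping: verifying the semidefiniteness condition for $\Delta_a - \delta\circ\pi_i$ cleanly (making sure the endpoint $b = \pm 2\sqrt{a(a-1)}$ is included and that no strict concavity is secretly needed in Theorem~\ref{thm:uncertainty}'s proof), and then carrying out the optimization without sign errors so that the constant simplifies exactly to $\sqrt{\beta(d-\beta)}$. There is no conceptual difficulty — the entire force of the argument is already packaged in Theorem~\ref{thm:uncertainty}, and what remains is choosing the right two-parameter test function and minimizing a smooth function of one real variable. One should also remark that, just as for the entropy version, the same substitution works verbatim in the unimodular infinite setting via the generalization of Theorem~\ref{thm:fiid} mentioned after Theorem~\ref{thm:uncertainty}, giving a covariance bound for factor-of-IID processes in terms of the Cheeger constant.
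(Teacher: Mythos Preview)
Your proposal is correct and follows essentially the same approach as the paper: choose $\delta=\var$, take $\Delta$ of the form $a(\var\mu_1+\var\mu_2)+b\,\cov(\mu)$, verify that Assumption~\ref{assumption2} holds precisely when $a>1$ and $|b|\le 2\sqrt{a(a-1)}$, apply Theorem~\ref{thm:uncertainty} at the endpoint $b=\pm 2\sqrt{a(a-1)}$, and optimize in $a$ to obtain $a=\tfrac{\beta/d}{2\beta/d-1}$ and the constant $\sqrt{\beta(d-\beta)}$. The only caveat is the boundary case $\beta=d/2$ (so $2\beta/d-1=0$), where the optimizer degenerates; there the claimed bound reduces to the trivial $|\sum\cov|\le \tfrac{d}{2}\sum\var$, which holds by Cauchy--Schwarz, so no harm is done.
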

This means that if each $X_v$ has the same variance, 
then the average correlation is at most $2\sqrt{\beta(d-\beta)}/d$. 
Note that the same bound could be deduced for factor-of-IID processes 
over unimodular transitive graphs with $\beta=(d+\psi)/2$ 
where $\psi$ is the edge Cheeger constant. 
In this infinite transitive graph setting, however, there is another way 
to prove this bound by combining the following known results. 
\begin{itemize}
\item The average correlation is bounded by the spectral radius of 
the adjacency operator divided by $d$. 
This follows from the results of \cite{spec} 
on the covariance structure of factor-of-IID processes.
\item By a Cheeger-tpye inequality \cite[Theorem 6.7]{lyons_peres_2017}, 
the spectral radius is bounded by 
$$\sqrt{d^2-\psi^2} = \sqrt{(d+\psi)(d-\psi)} = 2 \sqrt{\beta(d-\beta)}.$$  
\end{itemize} 

\subsection{Specific problems}
It is an intriguing question whether there are other general uncertainty functions 
that satisfy Assumption \ref{assumption2} and lead to useful inequalities. 
Instead of looking for further general inequalities, another approach is that, 
given a specific application, we try to find the best functions $\delta$ and $\Delta$ to use. 
In fact, there is hope that this approach might provide 
the optimal bound for factor-of-IID independent sets. 

In the setting of independent sets, each vertex can have two states ($0$, $1$)
and vertices of state $1$ cannot be neighbors. 
So the state space is $A = \{0,1\}$, and $\delta$ can be described 
by a concave function $f \colon [0,1] \to \IR$ with $f(0)=f(1)=0$. 
Furthermore, the measures $\mu$ in question are concentrated on 
$A \times A \setminus \{(1,1)\} = \{ (0,0); (0,1); (1,0) \}$. 
Such a measure $\mu$ can be represented by a point $(x,y)$ 
of the triangle $T = \{ (x,y) \, : \, x,y \geq 0;\ x+y \leq 1 \} \subset \IR^2$, 
where $x$, $y$, and $1-x-y$ are the probabilities of $(1,0)$, $(0,1)$, and $(0,0)$, respectively. 
Then $\Delta$ corresponds to a function $F \colon T \to \IR$ 
with $F(0,0)=F(0,1)=F(1,0)=0$ 
such that the functions $F(x,y) - f(x)$ and $F(x,y)-f(y)$ are both concave on $T$. 
For $d$-regular graphs, if each vertex is included in the independent set with probability $q$, 
then we get that $F(q,q) \geq (2\beta / d) f(q)$. 
In particular, we get the following for factor-of-IID independent sets in $T_d$, 
in which setting we have $\beta = d-1$. 
\begin{lemma}
Let $q$ be the (maximum) density of a factor-of-IID independent set 
over the $d$-regular tree $T_d$. By $T$ we denote the triangle 
$\{ (x,y) \, : \, x,y \geq 0;\ x+y \leq 1 \} \subset \IR^2$. 
Suppose that $f \colon [0,1] \to \IR$ is a concave function with $f(0)=f(1)=0$ 
and $F \colon T \to \IR$ is such that $F(0,0)=F(0,1)=F(1,0)=0$ 
and $F(x,y) - f(x)$ and $F(x,y)-f(y)$ are both concave on $T$. 
Then the following holds true: 
$$ F(q,q) \geq \frac{2(d-1)}{d} f(q) .$$
\end{lemma}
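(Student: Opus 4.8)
The plan is to deduce this lemma as a direct instance of Theorem~\ref{thm:uncertainty} (in its transitive/infinite incarnation alluded to after that theorem), with a careful choice of the uncertainty functions $\delta$ and $\Delta$ dictated by the independent-set constraint. First I would set $A = \{0,1\}$ and let $\delta$ be the uncertainty function on probability measures over $A$ given by $\delta(\nu) = f(\nu(\{1\}))$; since $f$ is concave with $f(0)=f(1)=0$, this $\delta$ is indeed an uncertainty function in the sense of the definition. Similarly, a probability measure $\mu$ on $A\times A$ supported on $\{(0,0),(0,1),(1,0)\}$ is encoded by the point $(x,y)\in T$ with $x=\mu(\{(1,0)\})$, $y=\mu(\{(0,1)\})$, and I would set $\Delta(\mu) = F(x,y)$. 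The condition $F(0,0)=F(0,1)=F(1,0)=0$ says exactly that $\Delta$ vanishes on point masses, so $\Delta$ is an uncertainty function on such measures; the hypothesis that $F(x,y)-f(x)$ and $F(x,y)-f(y)$ are concave on $T$ is precisely Assumption~\ref{assumption2}, namely that $\Delta - \delta\circ\pi_1$ and $\Delta-\delta\circ\pi_2$ are concave (note that the marginal of $\mu$ on the first coordinate assigns mass $x$ to $1$, and on the second coordinate mass $y$ to $1$).

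Next I would apply the infinite (unimodular transitive) version of Theorem~\ref{thm:uncertainty} to $G = T_d$ with a factor-of-IID independent set of density $q$. For $T_d$ the edge Cheeger constant is $d-2$, so the relevant coefficient is $\beta = \tfrac12\big(d + (d-2)\big) = d-1$, as already noted in the excerpt. The theorem yields, in the transitive per-vertex normalization, the inequality
\[
\frac{1}{2}\sum_{o'\sim o} \Delta(X_o,X_{o'}) \geq \beta\, \delta(X_o).
\]
By $\Aut(T_d)$-invariance of the process, for every neighbor $o'$ of $o$ the pair $(X_o,X_{o'})$ has the same distribution, represented by $(x,y)=(q,q)$ in $T$ (each endpoint is in the set with probability $q$), so $\Delta(X_o,X_{o'}) = F(q,q)$; and $\delta(X_o) = f(q)$. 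The edge sum has $d$ terms, so the left-hand side is $\tfrac{d}{2}F(q,q)$, giving $\tfrac{d}{2}F(q,q) \geq (d-1) f(q)$, i.e. $F(q,q) \geq \frac{2(d-1)}{d} f(q)$, which is exactly the claim.

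A few routine verifications remain: that $q$ being the density means $\P(X_o=1)=q$ for the fixed vertex $o$ (immediate from transitivity), and that the marginals of the edge measure are what I claimed. The only genuinely substantive point — and the one I would flag as the main obstacle if one insisted on full rigor — is the passage from the finite Theorem~\ref{thm:uncertainty} to its factor-of-IID analogue on $T_d$; this parallels the passage from Theorem~\ref{thm:hypergraph_version} to Theorem~\ref{thm:block_factor_on_transitive}, using a random-order revealing of the seeds, the Mass-Transport Principle, and a weak-limit approximation of factors of IID by block factors. The excerpt explicitly asserts this generalization (\emph{``As for the infinite setting, Theorem~\ref{thm:fiid} can be generalized similarly''}), so I would simply invoke it, noting that the concavity conditions on $\delta$ and $\Delta$ are exactly what make the key monotonicity $h(i,e)\geq \max(h(i,u),h(i,v))$ survive in the uncertainty-function setting, and that finiteness of $A=\{0,1\}$ means no integrability subtleties arise.
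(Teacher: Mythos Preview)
Your proposal is correct and follows essentially the same approach as the paper: the lemma is stated immediately after the paper sets up exactly the identification $\delta(\nu)=f(\nu(\{1\}))$ and $\Delta(\mu)=F(x,y)$ on measures supported on $\{(0,0),(0,1),(1,0)\}$, verifies that the hypotheses on $f,F$ are precisely Assumption~\ref{assumption2}, and then applies the infinite (factor-of-IID) version of Theorem~\ref{thm:uncertainty} on $T_d$ with $\beta=d-1$. Your write-up spells out the routine verifications (marginals, invariance, the edge count) that the paper leaves implicit, but the argument is the same.
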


This gives the following upper bound for the maximum density $q$: 
$$ \inf \left\{ q \,:\, F(q,q) < (2(d-1) / d) f(q) \right\} .$$
So we need to find functions $f,F$ such that the above infimum is as small as possible. 
Solving this convex optimization problem, even numerically for a given $d$, would be very interesting. 
The question to determine the independence ratio of a random $d$-regular graph 
and the closely related problem of finding the maximum density of 
a factor-of-IID independent set on $T_d$ have been thoroughly studied; 
see Section \ref{sec:appl} for details.  
Note that if we use entropy for $\delta$ and $\Delta$, then we get the well-known Bollob\'as bound. 
In recent years there has been a lot of activity, especially regarding the case $d=3$: 
see \cite{E, hoppen_wormald, endreuj} for lower bounds 
and \cite{mckay,lelarge,balogh} for upper bounds. 
Finding the optimal $f$ and $F$ 
may lead to a breakthrough in this problem. 
The above optimization problem can also be written as an LP problem 
in terms of the second derivatives of $f$ and $F$. 
Its dual solution provides necessary and sufficient conditions for a factor-of-IID 
independent set to reach the obtained bound. Therefore the bound must be optimal 
provided that these conditions can be satisfied with an arbitrarily small error.

\section{Applications} \label{sec:appl}

Entropy inequalities can be used to acquire bounds for the size of 
different combinatorial structures (such as minimum or maximum cuts) on graphs. 
Bounds for independent sets have the most extensive literature. 
In this section we demonstrate how the new inequalities presented in this paper 
lead to results concerning random independent sets.

One of the first results in this direction is due to Bollob\'as 
who gave an upper bound for the independence ratio of random $d$-regular graphs 
\cite{bollobas}. 
The counting argument behind this result essentially proves 
the edge-vertex entropy inequality in this special setting. 
The upper bound obtained is asymptotically sharp: $2 \log(d) /d$ as $d \to \infty$. 
The lower bound is due to Frieze and {\L}uczak \cite{frieze_random_regular}.
Later Gamarnik and Sudan showed \cite{gamarnik} that so-called local algorithms 
cannot produce independent sets of this optimal size on large-girth graphs. 
Then in \cite{mustazeebalint} the edge-vertex entropy inequality was used 
in a non-trivial way to prove that independent sets provided by local algorithms 
can have density at most $\log(d) /d$ (asymptotically as $d \to \infty$). 
This is equivalent to saying that factor-of-IID independent sets on $T_d$ 
have density at most $( 1+o(1) ) \log(d) /d$. 
 
\subsection{Random independent sets and fractional colorings}
The graph parameter \emph{fractional chromatic number} can be defined as the inverse of 
the maximum ``homogeneous density'' of a random independent set. 
More precisely, let $G$ be a finite graph and let $q$ be the largest real number 
with the property that there exists a random independent set $I$ on $G$ 
such that any given vertex lies in $I$ with probability at least $q$. 
Then $q$ coincides with the inverse $1/\chi_{\textrm{\scriptsize f}}(G)$ of the fractional chromatic number. 

It is natural to consider random independent sets that are obtained by local algorithms; 
one could define the corresponding \emph{local fractional chromatic number}. 
We will treat the finite and infinite settings simultaneously. 

\textbf{Infinite setting:} given a unimodular transitive graph and a finite radius $R$, 
we consider radius-$R$ factor-of-IID processes $X$ such that $X_v$ takes values $0,1$ 
and $\{ v \, : \, X_v=1 \}$ is an independent set, that is, 
$X_u=X_v=1$ cannot happen for neighboring vertices $u,v$. 
The probability $q$ that $X_v = 1$ is the same for each $v$ due to invariance. 

\textbf{Finite setting:} given a $d$-regular finite graph $G$ 
and subsets $W_1, \ldots, W_m$ of the vertex set $V$, 
we consider random variables $X_v$, $v \in V$, as in Assumption \ref{assumption} 
such that $\{ v \, : \, X_v=1 \}$ is an independent set. 
Let $q$ be such that $X_v =1$ with probability at least $q$ for each vertex $v \in V$. 
One can easily modify such an algorithm in a way that 
each of these probabilities is actually equal to $q$.

\subsection{Bounds via entropy inequalities}
Our goal is to find an upper bound for $q$ in terms of the edge expansion of $G$. 
For each vertex $v$ we clearly have 
$$ H( X_v ) = - q\log q - (1-q)\log(1-q) .$$
Moreover, for any pair of neighboring vertices $u,v$: 
$$ H( X_u, X_v ) = - 2q\log q - (1-2q)\log(1-2q) .$$
Let 
$$ \varphi(q) = \frac{ H( X_u, X_v ) }{ H( X_v ) } - 1 = 
\frac{ - 2q\log q - (1-2q)\log(1-2q) }{ - q\log q - (1-q)\log(1-q) } - 1 .$$
It is easy to see that this function is continuous and monotone decreasing on $(0,1/2)$ 
with $\lim_{q \to 0+} \varphi(q) = 1$ and $\lim_{q \to 1/2-} \varphi(q) = 0$. 
Straightforward calculations show that the behavior of $\varphi$ around $0$ and $1/2$ is as follows:
\begin{align*}
\varphi(q) &= 1 + \frac{q}{\log q} + O\left( \frac{q}{(\log q)^2} \right) ;\\
\varphi(1/2 - t) &= \frac{-2t \log t}{\log 2} + O(t) .
\end{align*}
Therefore one can define a monotone decreasing inverse function 
$\varphi^{-1} \colon [0,1] \to [0,1/2]$ such that asymptotically at $0$ and $1$ we have  
\begin{align*}
\varphi^{-1}(x) - \frac{1}{2} & \sim \frac{\log 2}{2} \frac{x}{\log x} \mbox{ as } x \to 0;\\
\varphi^{-1}(1 - x) & \sim -x \log x  \mbox{ as } x \to 0.
\end{align*}
For an explicit upper bound near $0$ and $1$, one can use the following estimates for $\varphi^{-1}$:
$$
\varphi^{-1}(x) \leq
\begin{cases}
\frac{1}{2} - \frac{\log 2}{2} \frac{x}{-\log x + 2 \log( - \log x) } & 
\mbox{if } 0 < x < 1/30,\\
-(1-x)\log(1-x) & \mbox{if } 2/3 < x < 1.
\end{cases}
$$

Now let $\tau$ denote the edge expansion (w.r.t.\ the $R$-ball or $W_1, \ldots, W_m$) normalized by $d$:
$$  \tau = \min_{W \subseteq B_R} \frac{ | \partial W | }{d|W|} \mbox{ and }
 \tau = \min_{1 \leq i \leq m; W \subseteq W_i} \frac{ | \partial W | }{d|W|} $$
in the infinite and finite setting, respectively. Then our entropy inequalities 
(Theorem \ref{thm:regular} and Theorem \ref{thm:block_factor_on_transitive}) say that 
$$ \frac{ H(X_u, X_v) } {H(X_v)} - 1 \geq \tau .$$
It follows that $\varphi(q) \geq \tau$ must hold. Equivalently, we get the upper bound 
$$ q \leq \varphi^{-1}( \tau ) .$$
In the infinite setting we get Theorem \ref{thm:indset_bound} stated in the introduction. 
Corollary \ref{cor:tessellation} is concerned with the special case when our graph 
corresponds to a regular tessellation of the hyperbolic plane. The Cheeger constant of the graph 
$G_{d,k}$ is known \cite{haggstrom_jonasson_lyons,higuchi_tomoyuki} to be 
$$ (d-2) \sqrt{1-\frac{4}{(k-2)(d-2)}} .$$
If $k$ is fixed and $d \to \infty$, then we have $\tau = 1 - \big( 2+2/(k-2) \big) /d + o(1)$. 
Using the asymptotics of $\varphi^{-1}$ at $\tau=1$, Corollary \ref{cor:tessellation} follows. 

Furthermore, we can get something non-trivial even over amenable graphs. 
Let $G$ be the standard Cayley graph of $\IZ^n$ 
which is a $d$-regular transitive graph for $d=2n$. 
Although $G$ is amenable, the normalized edge expansion $\tau$ with respect to a 
finite-radius ball $B_R$ is at least $c n R^{-1}$ for some positive constant $c$, 
and hence Theorem \ref{thm:Zn} follows. (Note that one can easily 
prove---simply by using that labels of vertices at distance $2R+1$ are independent---the 
upper bound $1/2-c/R$ for any $n$. Therefore the bound given in Theorem \ref{thm:Zn} 
is interesting when $R$ is sub-exponential in $n$. Also note that it is easy to construct 
a factor-of-IID independent set on $\IZ^n$ with radius $R$ and density $1/2 - cn/R$.) 

Lastly, we mention one immediate corollary of the finite version. 
\begin{theorem} \label{thm:fin_cor}
Let $G$ be a random $d$-regular bipartite graph on $2N$ vertices. 
Then for any $\delta > 0$  there exists a positive $\eps$ such that 
the following holds true for $G$ with high probability as $N$ goes to infinity: 
a randomized algorithm on $G$ for which each seed influences at most $1-\delta$ fraction of the vertices 
cannot produce an independent set of homogeneous density at least $1/2 - \eps$ 
(that is, at least one vertex will be included in the independent set with probability less than $1/2 - \eps$). 
\end{theorem}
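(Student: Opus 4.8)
The plan is to apply the finite entropy inequality of Theorem~\ref{thm:regular} to $G$, with the sets $W_i$ being the influence sets of the random seeds, and then to feed the resulting bound into the independent-set computation carried out in Section~\ref{sec:appl}. The only place where the random graph enters is through its edge expansion, and the only place where the hypothesis ``each seed influences at most a $1-\delta$ fraction of the vertices'' enters is to guarantee that this expansion is bounded below by a positive constant on all the relevant sets.

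So the first step is the graph-theoretic input: for $d\ge 3$ and every $\delta>0$ there is a constant $h=h(d,\delta)>0$ so that, with high probability as $N\to\infty$, the random $d$-regular bipartite graph $G$ on $2N$ vertices satisfies $|\partial W|\ge h|W|$ for every $W\subseteq V$ with $|W|\le(1-\delta)\,2N$. I would prove this from the spectral gap of random regular bipartite graphs (their nontrivial adjacency eigenvalues are at most $\lambda_d<d$ with high probability, e.g.\ by Friedman's theorem, or by cruder trace estimates). Writing $V=A\sqcup B$ for the bipartition and $W=S\cup T$ with $S\subseteq A$, $T\subseteq B$, one has $|\partial W|=d|S|+d|T|-2\,e(S,T)$; the refined expander mixing lemma bounds $e(S,T)\le \frac{d|S||T|}{N}+\lambda_d\sqrt{|S||T|\,(1-|S|/N)(1-|T|/N)}$, and after an application of AM--GM this gives $|\partial W|\ge(d-\lambda_d)\bigl(|S|+|T|-\tfrac{2|S||T|}{N}\bigr)$. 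Finally $|S|+|T|=|W|\le 2(1-\delta)N$ forces $|S|+|T|-\tfrac{2|S||T|}{N}\ge\delta|W|$, so $h=(d-\lambda_d)\,\delta$ works (the case in which $W$ lies inside one side is trivial, since then $|\partial W|=d|W|$). This is the step I expect to be the main obstacle, although for random regular graphs it is a routine, if slightly delicate, union bound / spectral computation.

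Granting this, suppose the algorithm uses seeds $Z_1,\dots,Z_m$ with influence sets $W_1,\dots,W_m$, each of size at most $(1-\delta)\,2N$, and that $\{v:X_v=1\}$ is an independent set. Every $W\subseteq W_i$ then also has $|W|\le(1-\delta)\,2N$, so on the high-probability event established above the edge Cheeger constant of $G$ with respect to the system $W_1,\dots,W_m$ is at least $h$, and Theorem~\ref{thm:regular} yields $\sum_{e\in E}H(X_e)\ge\beta\sum_{v\in V}H(X_v)$ with $\beta\ge\tfrac12(d+h)$. To exploit this I would first reduce to homogeneous density: assuming for contradiction that $\P(X_v=1)\ge q_0$ for every $v$, I attach a fresh private seed to each vertex and independently keep the value $X_v=1$ with the appropriate probability, obtaining an independent set with $\P(X_v=1)=q_0$ for every $v$; this still satisfies Assumption~\ref{assumption} (the added seeds have singleton influence sets, which do not decrease $\beta$), so the inequality above still applies.

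For this homogeneous process $H(X_v)=-q_0\log q_0-(1-q_0)\log(1-q_0)$ and, for every edge $\{u,v\}$, $H(X_u,X_v)=-2q_0\log q_0-(1-2q_0)\log(1-2q_0)$, as computed in Section~\ref{sec:appl}. Substituting $|E|=dN$ and $|V|=2N$ into the entropy inequality collapses it to $\varphi(q_0)\ge h/d$, i.e.\ $q_0\le\varphi^{-1}(h/d)$, where $\varphi$ is the function of Theorem~\ref{thm:indset_bound}. Since $h/d\in(0,1)$ we have $\varphi^{-1}(h/d)<\tfrac12$; hence, taking $q_0=\tfrac12-\eps$ for any fixed $\eps$ with $0<\eps<\tfrac12-\varphi^{-1}(h/d)$ --- a quantity depending only on $d$ and $\delta$ --- gives the contradiction $\tfrac12-\eps\le\varphi^{-1}(h/d)$, and the theorem follows.
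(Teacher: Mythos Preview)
Your proposal is correct and follows exactly the route the paper intends: the theorem is stated there as an ``immediate corollary of the finite version,'' meaning Theorem~\ref{thm:regular} combined with the $\varphi$-computation of Section~\ref{sec:appl}, and you have filled in precisely those details together with the one ingredient the paper leaves implicit, namely the uniform edge-expansion of random $d$-regular bipartite graphs on sets of size at most $(1-\delta)2N$. Your spectral/mixing-lemma derivation of that expansion (with the AM--GM step yielding $|\partial W|\ge(d-\lambda_d)\delta|W|$) and the thinning reduction to exact homogeneous density are both clean and correct; note only that the argument tacitly requires $d\ge 3$ for the spectral gap, a hypothesis the paper's statement omits but which is clearly intended.
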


\section{Appendix} \label{sec:app}

\subsection*{Entropy and conditional entropy}
Let $X$ be a discrete random variable 
taking $m$ distinct values with probabilities $p_1, \ldots, p_m$. 
Then the \emph{Shannon entropy} of $X$ is defined as 
$$ H(X) \defeq \sum_{i=1}^m -p_i \log( p_i ) .$$
The \emph{joint entropy} of finitely many random variables 
is the entropy of their joint distribution. 

One can define the \emph{conditional entropy} of $X$ conditioned on $Y$ by 
$H(X|Y) = H(X,Y) - H(Y)$, where $H(X,Y)$ is the joint entropy of $X$ and $Y$. 
This conditional entropy can be expressed as as the expectation (in $Y$) 
of the entropy of the (conditional) distribution of $X$ conditioned on $Y$, that is, 
\begin{equation*} 
H(X | Y) = \sum_{j=1}^n \P(Y = y_j) 
\sum_{i=1}^m -\P(X=x_i \, | \, Y=y_j) \log \P(X=x_i \, | \, Y=y_j) ,
\end{equation*}
where $x_1, \ldots, x_m$ and $y_1, \ldots, y_n$ 
denote the values taken by $X$ and $Y$, respectively. 
In other words, if $f_i$ denotes the mapping $y \mapsto \P( X=x_i | Y=y)$, 
then 
\begin{equation*} 
H(X | Y) = \E \sum_{i=1}^m -f_i(Y) \log f_i(Y) .
\end{equation*}
This second definition of conditional entropy can be generalized even when 
the random variable is not discrete: for an event $A$ the mapping $y \mapsto \P( A | Y=y )$ 
needs to be replaced by the conditional expectation $\E( \ind_A | Y )$, 
which is a measurable function of $Y$. 
\begin{lemma} \label{lem:info_gained}
Let $X_1,X_2,Y_1,Y_2$ be random variables 
such that $X_1$ and $X_2$ take finitely many values. Then 
$$ H( X_1,X_2 | Y_1 ) - H( X_1,X_2 | Y_1,Y_2 ) 
\geq H( X_1 | Y_1 ) - H( X_1 | Y_1,Y_2 ) .$$
\end{lemma}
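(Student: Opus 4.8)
\smallskip
\noindent\emph{Proof idea.} The plan is to read the inequality as the (conditional) data-processing statement $I(X_1,X_2;Y_2\mid Y_1)\ge I(X_1;Y_2\mid Y_1)$: indeed its left-hand side is $H(X_1,X_2\mid Y_1)-H(X_1,X_2\mid Y_1,Y_2)$ and its right-hand side is $H(X_1\mid Y_1)-H(X_1\mid Y_1,Y_2)$. Since $Y_1$ and $Y_2$ are not assumed discrete, I would not manipulate mutual informations directly but instead work with conditional \emph{distributions}, in the spirit of the ``second definition'' of conditional entropy recalled above. Write $Y=(Y_1,Y_2)$ and let $\nu$ denote the conditional distribution of $(X_1,X_2)$ given $Y$ --- concretely, the measurable function of $Y$ with coordinates $\E(\ind_{\{X_1=x_1,X_2=x_2\}}\mid Y)$, which takes values in the finite-dimensional simplex of probability measures on the (finite) range of $(X_1,X_2)$. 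For a measure $\rho$ on that range, let $\pi_1\rho$ be its first marginal and set $\Phi(\rho)\defeq H(\rho)-H(\pi_1\rho)$.

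First I would rewrite each of the four conditional entropies in the lemma as an expectation of the entropy of a conditional distribution: $H(X_1,X_2\mid Y)=\E\,H(\nu)$ and $H(X_1\mid Y)=\E\,H(\pi_1\nu)$, while the tower property identifies $\E(\nu\mid Y_1)$ with the conditional law of $(X_1,X_2)$ given $Y_1$ and, marginalization being linear, $\pi_1\,\E(\nu\mid Y_1)=\E(\pi_1\nu\mid Y_1)$ with the conditional law of $X_1$ given $Y_1$; hence $H(X_1,X_2\mid Y_1)=\E\,H\bigl(\E(\nu\mid Y_1)\bigr)$ and $H(X_1\mid Y_1)=\E\,H\bigl(\pi_1\,\E(\nu\mid Y_1)\bigr)$. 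Subtracting, the difference of the two sides of the lemma collapses to $\E\,\Phi\bigl(\E(\nu\mid Y_1)\bigr)-\E\,\Phi(\nu)$, which by the tower property equals $\E\bigl[\Phi(\E(\nu\mid Y_1))-\E(\Phi(\nu)\mid Y_1)\bigr]$. So the lemma follows once we know the conditional Jensen inequality $\Phi\bigl(\E(\nu\mid Y_1)\bigr)\ge\E\bigl(\Phi(\nu)\mid Y_1\bigr)$, and for that it suffices that $\Phi$ be concave on the simplex.

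The next step is to check that $\Phi(\rho)=H(\rho)-H(\pi_1\rho)$ is concave --- equivalently, that conditional entropy $H(X_2\mid X_1)$ is a concave function of the joint distribution of $(X_1,X_2)$; this is the same concavity that makes the entropy functional fit Assumption \ref{assumption2}. The argument I have in mind: for joint laws $P,Q$ and $\lambda\in[0,1]$ with mixture $R=\lambda P+(1-\lambda)Q$, each conditional slice $R(\cdot\mid X_1=x_1)$ is the mixture of $P(\cdot\mid X_1=x_1)$ and $Q(\cdot\mid X_1=x_1)$ with weights proportional to $\lambda P(X_1=x_1)$ and $(1-\lambda)Q(X_1=x_1)$; applying concavity of Shannon entropy on each slice and then summing over $x_1$, weighted by $R(X_1=x_1)$, yields $\Phi(R)\ge\lambda\Phi(P)+(1-\lambda)\Phi(Q)$.

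The one point I expect to need genuine care is the measurability bookkeeping in the first step when $Y_1,Y_2$ are not discrete: namely, justifying that $\E(\nu\mid Y_1)$ really is a version of the conditional law of $(X_1,X_2)$ given $Y_1$, and that $\pi_1$ commutes with this conditional expectation. Both become routine once one works throughout with the finite vector of conditional expectations $\E(\ind_{\{X_1=x_1,X_2=x_2\}}\mid\cdot)$ instead of with abstract regular conditional probabilities --- which is exactly why the ``second definition'' of conditional entropy is the convenient one here.
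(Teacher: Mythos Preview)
Your proof is correct, but it takes a longer road than the paper's. The paper simply applies the chain rule
\[
H(X_1,X_2\mid Y_1)=H(X_1\mid Y_1)+H(X_2\mid X_1,Y_1),\qquad
H(X_1,X_2\mid Y_1,Y_2)=H(X_1\mid Y_1,Y_2)+H(X_2\mid X_1,Y_1,Y_2),
\]
subtracts, and observes that $LHS-RHS=H(X_2\mid X_1,Y_1)-H(X_2\mid X_1,Y_1,Y_2)\ge 0$ because conditioning cannot increase entropy. That is the whole argument. Your approach instead packages $H(X_2\mid X_1)$ as the function $\Phi(\rho)=H(\rho)-H(\pi_1\rho)$ on the simplex, proves $\Phi$ is concave, and then invokes conditional Jensen with respect to $\E(\,\cdot\mid Y_1)$. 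The two routes compute the same residual quantity: your Jensen gap $\E\bigl[\Phi(\E(\nu\mid Y_1))-\E(\Phi(\nu)\mid Y_1)\bigr]$ is exactly $H(X_2\mid X_1,Y_1)-H(X_2\mid X_1,Y_1,Y_2)$. What your version buys is an explicit link to Assumption~\ref{assumption2}: you are essentially verifying that entropy satisfies \eqref{eq:delta}, so your argument would transplant verbatim to any pair $(\delta,\Delta)$ meeting that assumption, whereas the paper's chain-rule proof is specific to entropy. The price is the extra measurability bookkeeping you flag, which the chain-rule proof sidesteps.
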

\begin{proof}
We will refer to the left-hand side and the right-hand side of the above inequality 
as $LHS$ and $RHS$, respectively. Let us notice that 
\begin{align*}
H(X_1,X_2 | Y_1) &= H(X_1 | Y_1) + H(X_2 | X_1,Y_1) ;\\
H(X_1,X_2 | Y_1,Y_2) &= H(X_1 | Y_1,Y_2) + H(X_2 | X_1,Y_1,Y_2) .
\end{align*}
By taking the difference of the above equations we get 
$$ LHS = RHS + 
\underbrace{ H(X_2 | X_1,Y_1) - H(X_2 | X_1,Y_1,Y_2) }_{\geq 0} 
\geq RHS .$$
Note that the above argument works even if 
$Y_1$ and $Y_2$ are not necessarily discrete.  
\end{proof}

Next we state a weighted version of Shearer's inequality 
and we include a short proof. 
\begin{theorem}[Shearer's inequality, weighted version]
Let $X_v$, $v \in V$, be finitely many random variables. 
Suppose that for each $U \subseteq V$ a nonnegative weight $\alpha_U$ is given. Then 
$$ \sum_{U \subseteq V} \alpha_U H( X_U ) \geq \lambda H( X_V ) \mbox{, where } 
\lambda = \min_{v \in V} \sum_{v \in U \subseteq V} \alpha_U .$$
\end{theorem}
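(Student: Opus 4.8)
The plan is to prove the weighted Shearer inequality by a chain-rule argument, revealing the variables one at a time according to a fixed ordering of $V$ and accounting for how each variable's contribution is distributed among the sets $U$ containing it. Write $V = \{v_1, \ldots, v_n\}$ in some fixed order, and for a set $U \subseteq V$ let $U_{<j}$ denote $\{v_i \in U : i < j\}$. The chain rule gives
\begin{equation*}
H(X_U) = \sum_{j : v_j \in U} H\bigl( X_{v_j} \mid X_{U_{<j}} \bigr),
\end{equation*}
and similarly $H(X_V) = \sum_{j=1}^n H\bigl( X_{v_j} \mid X_{V_{<j}} \bigr)$. First I would substitute the first identity into the left-hand side and switch the order of summation, so that the coefficient of the ``slot'' $v_j$ becomes $\sum_{v_j \in U \subseteq V} \alpha_U \, H\bigl( X_{v_j} \mid X_{U_{<j}} \bigr)$.

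The key step is the monotonicity of conditional entropy under conditioning on more variables: since $U_{<j} \subseteq V_{<j}$ for every $U$, we have $H\bigl( X_{v_j} \mid X_{U_{<j}} \bigr) \geq H\bigl( X_{v_j} \mid X_{V_{<j}} \bigr)$. Hence each term in the $v_j$-slot is bounded below by $\alpha_U \, H\bigl( X_{v_j} \mid X_{V_{<j}} \bigr)$, and summing over the sets $U$ containing $v_j$ gives a lower bound of $\bigl( \sum_{v_j \in U \subseteq V} \alpha_U \bigr) H\bigl( X_{v_j} \mid X_{V_{<j}} \bigr) \geq \lambda \, H\bigl( X_{v_j} \mid X_{V_{<j}} \bigr)$ by the definition of $\lambda$ (and since $H(X_{v_j} \mid X_{V_{<j}}) \geq 0$). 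Summing over $j$ and applying the chain rule for $H(X_V)$ in reverse yields $\sum_{U} \alpha_U H(X_U) \geq \lambda H(X_V)$.

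I do not expect a serious obstacle here; the proof is essentially the standard entropy-counting argument for Shearer's inequality, and the only point requiring a little care is making sure the index bookkeeping in the double sum is correct and that monotonicity of conditional entropy is invoked with the inclusion going the right way. One can also phrase the whole thing more symmetrically by noting that the sets $U$ need only be enumerated where $\alpha_U > 0$, which is automatic since the sum is over all $U \subseteq V$ and zero weights contribute nothing. If one wants to avoid even writing the chain rule explicitly, an alternative is to induct on $|V|$, peeling off one vertex and conditioning, but the direct chain-rule computation is cleaner and is what I would write up.
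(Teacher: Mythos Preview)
Your proof is correct and is essentially identical to the paper's own argument: both fix an ordering of $V$, expand each $H(X_U)$ via the chain rule, apply monotonicity of conditional entropy to replace $H(X_{v_j}\mid X_{U_{<j}})$ by $H(X_{v_j}\mid X_{V_{<j}})$, and then use $\sum_{v_j\in U}\alpha_U\geq\lambda$ before reassembling $H(X_V)$. The only differences are notational.
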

\begin{proof} 
We may assume that $V = \{1, \ldots, n\}$ for some positive integer $n$. 
For a sequence $1 \leq k_1 < \ldots < k_r \leq n$ we decompose 
the joint entropy $H(X_U)$ corresponding to the subset $U = \{k_1, \ldots, k_r\}$ 
in the following way:
$$ H(X_U) = \sum_{i=1}^r H\left( X_{k_i}~|~X_{k_1}, \ldots, X_{k_{i-1}} \right) \geq 
\sum_{i=1}^r H\left( X_{k_i}~|~X_1, \ldots, X_{k_i-1} \right). $$
It follows that 
$$ \sum_{U \subseteq V} \alpha_U H( X_U ) \geq \sum_{k=1}^n 
\underbrace{ \sum_{k \in U \subseteq V} \alpha_U }_{\geq \lambda} 
H\left( X_k~|~X_1, \ldots, X_{k-1} \right) \geq \lambda H( X_V ) .$$
\end{proof}

\subsection*{Acknowledgements}
The authors would like to thank Lewis Bowen, David Gamarnik, Russell Lyons, and Mustazee Rahman 
for their helpful feedback and remarks. The authors are also grateful to an anonymous referee 
for a very careful reading of the manuscript and for several useful comments.

Endre Cs\'oka was supported 
by Marie Sk{\l}odowska-Curie Individual Fellowship grant no.\ 750857 
and partially supported by ERC Consolidator Grant InvGroGra 648017. 
B\'alint Vir\'ag and Viktor Harangi were supported by 
``MTA R\'enyi Lend\"ulet V\'eletlen Spektrum Kutat\'ocsoport''. 
B\'alint Vir\'ag was also supported by the Canada Research Chair program, 
the NSERC Discovery Accelerator grant, 
and the ERC Consolidator Grant InvGroGra 648017.

\bibliographystyle{abbrv}
\bibliography{refs}

\end{document}